\documentclass[12pt,reqno]{amsart}
\usepackage{amsfonts,amssymb,amsmath}
\usepackage{enumitem}
\usepackage{ytableau}
\usepackage{datetime}
\usepackage{tikz}
\usepackage{xcolor}

\definecolor{fgreen}{RGB}{44,144, 14}

\setlength{\textheight}{23cm} 
\setlength{\textwidth}{16cm} 
\setlength{\topmargin}{-0.8cm} 
\setlength{\parskip}{0.25\baselineskip} 
\renewenvironment{proof}{{\bfseries Proof.}}{\qed}

\hoffset=-1.4cm 
\numberwithin{equation}{section} 
\baselineskip=15.5pt

\newtheorem{theorem}{Theorem}[section] 
\newtheorem{proposition}[theorem]{Proposition} 
\newtheorem{corollary}[theorem]{Corollary} 
\newtheorem{lemma}[theorem]{Lemma}

\theoremstyle{definition}
\newtheorem{definition}[theorem]{Definition} 
\newtheorem{remark}[theorem]{Remark} 
\newtheorem{example}[theorem]{Example}
\usepackage[colorlinks=true,citecolor=cyan,  backref, pagebackref,   urlcolor=blue,linkcolor=blue]{hyperref}
\newcommand*{\bigchi}{\mbox{\Large$\chi$}}
\def\D{\mathbb {D}}
\def\R{\mathbb {R}}
\def\C{\mathbb {C}}
\def\N{\mathbb {N}}
\def\H{\mathbb {H}}

\def\O{\mathbb {O}}
\def\E{\mathbb {E}}

\def\d{\mathbf{ d}}

\def\ZC{\mathcal {Z}}
\def\CC{\mathcal {C}}

\def\OC{\mathcal {O}}
\def\BC{\mathcal {B}}

\def\g{\mathfrak {g}}
\def\h{\mathfrak {h}}
\def\x{X_n}
\def\p{\mathfrak {p}}

\def\s{\mathfrak {s}}
\def\a{\mathfrak {a}}

\def\o{\mathfrak {o}}
\def\l{\mathfrak {l}}
\def\z{\mathfrak {z}}

\def\cds{\cdots}
\def\th{\theta}

\def\<>{\langle \cdot\, , \cdot \rangle}
\def\lto{\longrightarrow}


\begin{document} 
 \title[Adjoint reality ]{A note on adjoint reality  in simple complex Lie algebras}
 \author[K. Gongopadhyay  \and C. Maity]{Krishnendu Gongopadhyay \and Chandan Maity
 }
\thanks{ORCID ID: K. Gongopadhyay (0000-0003-4327-0660);   C. Maity (0000-0003-0673-3953)}
\address{Indian Institute of Science Education and Research (IISER) Mohali,
 Knowledge City,  Sector 81, S.A.S. Nagar 140306, Punjab, India}
\email{krishnendug@gmail.com, krishnendu@iisermohali.ac.in}
\address{Indian Institute of Science Education and Research (IISER) Berhampur,  Odisha, 
 India}
\email{cmaity@iiserbpr.ac.in}
\makeatletter
\@namedef{subjclassname@2020}{\textup{2020} Mathematics Subject Classification}
\makeatother
 \subjclass[2020]{Primary 20E45; Secondary: 22E60, 17B08}
\keywords{Adjoint orbits,  real element, Strongly adjoint real element, semisimple element}

\begin{abstract}
Let $G$ be a Lie group with Lie algebra $\g$.   In \cite{GM}, an infinitesimal version of the notion of classical reality, namely adjoint reality, has been introduced. An element $X \in \g$ is adjoint real if $-X$ belongs to the adjoint orbit of $X$ in $\g$.  In this paper, we investigate the adjoint real and the strongly adjoint real semisimple elements in complex simple classical Lie algebras. We also prove that every element in a  complex symplectic Lie algebra is adjoint real.
\end{abstract}

\maketitle

\section{Introduction} 
In the group theoretical  set-up an element $g$ in a group $G$ is called \emph{real} or \emph{reversible} if it is conjugate to $g^{-1}$ in $G$.  An element $g$ is \emph{strongly real} or \emph{strongly reversible}  in $G$ if it is conjugate to $g^{-1}$ by an involution. Classification of real and strongly real elements in a group is a  problem of wide interest; see \cite{OS}, \cite{ST}.

Let $G$ be a Lie group with Lie algebra $\g$. Consider the natural {\rm Ad}$(G)$-representation of $ G $ on its Lie algebra $ \g $
$$  
{\rm Ad} \colon G \lto {\rm GL}(\g)\,.
$$
For $X\in \g$, {\it the adjoint orbit}  of $X$ in $\g$ is defined as $\OC_X:= \{{\rm Ad}(g)X\mid g \in G \}$.
If $X\in \g$ is semisimple, then $\OC_X$ is called {\it semisimple orbit}.  Understanding the adjoint orbits in a semisimple Lie group has been an intense area of research,  cf. \cite{CoMc}, \cite{Mc}.   For various results related to semisimple orbits, see \cite[Chapter 2]{CoMc}.

In \cite{GM}, the authors  introduced the notion of {\it  adjoint reality} which we recall now.   Consider the  Ad$(G)$-representation of $G$ on $\g$. For a linear Lie group $ G $, $ {\rm Ad}(g)X=gXg^{-1} $.
\begin{definition}[{\cite[Definition 1.1]{GM}}]\label{def1} 
An element $X\in \g$ is called {\it {\rm Ad}$_G$-real } if $-X \in \OC_X$. An  {\rm Ad}$_G$-real  element is called {\it strongly {\rm Ad}$_G $-real}  if $-X = {\rm Ad}(\tau) X $ for some $\tau\in G$ so that $\tau ^2 = {\rm Id}$.
\end{definition}  

This is an infinitesimal analogue of the reality in Lie groups.  
It was shown in \cite{GM} that  the reality of the unipotent elements in a Lie group and  the ${\rm Ad}_G$-reality of the nilpotent elements in the corresponding Lie algebras  are equivalent via the exponential map.  This correspondence was used to classify unipotent real elements in classical Lie groups in \cite{GM}.   However, this correspondence   does not necessarily hold in general. Nevertheless, classifying the adjoint reality in Lie agebras is a problem of independent algebraic interest, and it also helps to understand the real and strongly real elements in the image of the exponential map, thus providing understanding of reality in the Lie group to a large extent.  The above notion of adjoint reality has turned useful  in  classifying the strongly real elements in $ {\rm GL}_n(\H) $, $ {\rm GL}_n(\D) \ltimes \D^n$ for $ \D=\R,  ~\C $ or $ \H $,  respectively, cf.  \cite{GLM1},  \cite{GLM2}.

With this motivation,  it is a natural problem to investigate the  adjoint reality for semisimple orbits.     The aim of this note is   to  classify the  adjoint real and strongly adjoint real  semisimple elements  in the complex simple classical Lie algebras; see Theorem \ref{Pro-SS-sl-n-c},  Theorem \ref{thm-sonC-semisimple}, Theorem \ref{thm-sp-nC}.   The adjoint real nilpotent elements in these Lie algebras are  classified in \cite{GM}.  

 By the Jordan decomposition, every element in a semisimple Lie algebra decomposes as a unique sum of a semisimple and a unipotent element. 
 Thus, classifying an arbitrary adjoint real element in a Lie algebra is intimately related to such classification of  semisimple and  nilpotent elements.   We demonstrate this for the symplectic Lie algebras, i.e. semisimple Lie algebras of type $C_n$.   Recall that  symplectic group plays an vital role in many branch of Mathematics. Thus the characterisation of adjoint real elements in the symplectic Lie algebra  are fundamentally important which is done in Theorem \ref{spnc-real} by using description of the centralizers.
We note here that classifying strongly real elements using this idea might require further  technicalities as we have seen for the type $A_n$ Lie algebras in \cite{GLM3}.  Other than Lie algebras of type $A_n$ and $C_n$, adjoint reality for arbitrary elements in other semisimple Lie algebras are yet to be fully understood.

Given $X\in \g$ one  {       defines} the following subsets of $G$. The {\it  centralizer} and the {\it reverser } of an element $X$ in $ G $ are respectively defined as 
$${Z}_G(X): = \{ s \in G \mid sXs^{-1} = X \}, \hbox{ and \ } {R}_G(X) := \{ r \in G \mid rXr^{-1} = -X \}.$$
Note that $ {Z}_G(X)$ is a subgroup but the set ${R}_G(X)$  is a right coset of the centralizer ${Z}_G(X)$.  Thus the { reversing symmetry group} or the  { extended centralizer} ${E}_G(X) := {Z}_G(X) \cup {R}_G(X)$ is a subgroup of $G$ in which ${Z}_G(X)$ has index $1$ or $2$.   
The group ${E}_G(X)$ is an extension of ${Z}_G(X)$ of degree at most two. 
In the group theoretical  set-up we refer to  {\cite[\S 2.1.4]{OS}},  \cite{BR}    on  reversing symmetries for groups.   

To find the reversing symmetric group ${E}_G(X)$, it is enough to construct one reversing element which is not in the centralizer.  
We have explicitly constructed an element in $ R_G(X) $ for each adjoint real semisimple element. Recall that for  a simply connected complex semisimple Lie group $ G $, the centralizer $ Z_G(X)$ of a semisimple element $ X$ is connected; see \cite[Theorem 2.3.3, p.28]{CoMc}. 
Thus the centralizer is determined by its Lie algebra which has a nice description in terms of Cartan subalgebra and certain root vectors; see \cite[Lemma 2.1.2, p. 20]{CoMc}.
Therefore, our construction also classifies the reverser of adjoint real  semisimple elements  in simple Lie groups.

\section{Notation and background}\label{notation} 
The Lie groups will be denoted by the capital 
letters, while the Lie algebra of a Lie group will be denoted by the corresponding lower case German letter. 
For a subgroup $H$ of $G$ and a subset $S$ of $\g$, the
subgroup $Z_{H} (S)$ of $H$ that fixes $S$ pointwise under the adjoint action is called the {\it centralizer}
of $S$ in $H$. Similarly, for a  Lie subalgebra $\h \,\subset\, \g$ and a subset $S \,\subset\, \g$, by $\z_\h (S)$ we will denote
the subalgebra of $\h$ consisting of all the elements that commute with every element of $S$. 
For $A\in {\rm M}_n(\C)$, $A^t$ denotes the transpose of the matrix $A$.
Let $ {\rm I}_n $ denote the $ n \times n$ identity matrix, and 
\begin{equation}\label{defn-I-pq-J-n}
	{\rm J}_n \,:=\, 
	\begin{pmatrix}
		& -{\rm I}_n  \\
		{\rm I}_n & 
	\end{pmatrix}\,.
\end{equation}
Here we will work with the following classical simple Lie groups and Lie algebras over $\C$:
\begin{align*}
{\rm SL}_n (\C)&:= \{g \,\in\, {\rm GL}_n (\C) \,\mid\, \det (g) \,=\,1  \},  &{\s\l}_n (\C):=\{z \,\in\, {\rm M}_n (\C) \,\mid\, \text{tr} (z) = 0 \};
	\\
{\rm SO} (n,\C)&:= \{g \,\in\, {\rm SL}_{n}(\C)\,\mid\, g^t g \,=\, {\rm I}_{n} \}, &{\s\o} (n,\C):= \{z \,\in\, \s\l_{n}(\C) \,\mid\, z^t {\rm I}_{n} + {\rm I}_{n} z =0 \};\\
{\rm Sp} (n,\C)&:= \{g \,\in\, {\rm SL}_{2n}(\C) \,\mid\, g^t {\rm J}_{n} g \,={\rm J}_{n} \}, &{\s\p} (n,\C):= \{z \,\in\, \s\l_{2n}(\C) \,\mid\, z^t {\rm J}_{n} + {\rm J}_{n} z =0 \}.
\end{align*}

For any group $H$, let $H^n_\Delta$ denote the diagonally embedded copy of $H$ 
in the $n$-fold direct product $H^n$. Similarly, for a matrix $A\in {\rm M}_n(\C)$, let $A^n_\Delta$ denote the diagonally embedded copy of $A$  in the $n$-fold direct sum $A\oplus \dots \oplus A$.  
For a pair of disjoint ordered sets $(v_1,\, \dots ,\,v_n)$ and $(w_1,\, \dots ,\,w_m)$, the ordered set $(v_1,\, \dots ,\,v_n,\, w_1,\, \dots ,\, w_m)$ will be denoted by
$$(v_1, \,\dots ,\,v_n) \vee (w_1,\, \dots ,\,w_m)\, .$$

For a  Lie algebra $ \g $ over $\C$, a subset $\{X,H,Y\} \,\subset\, \g$ is said to be a {\it $\s\l_2$-triple}  if $X \,\neq\, 0$, $[H,\, X] = 2X$, $[H,\, Y] =  -2Y$ and $[X,\, Y] =H$. 
Note that  for a $\s\l_2$-triple $\{X,H,Y\}$ in $\g$, $\text{Span}_\C \{X,H,Y\}$ is isomorphic to  $\s\l_2(\C)$. We now recall a well-known result due to Jacobson and Morozov.

\begin{theorem}[{Jacobson-Morozov, cf.~\cite[Theorem~9.2.1]{CoMc}}]\label{Jacobson-Morozov-alg}
	Let $X\,\in\, \g$ be a non-zero nilpotent element in a  semisimple Lie algebra $\g$ over $\C$. Then there exist $H,\,Y\, \in\, \g$ such that $\{X,H,Y\}$ is a $\s\l_2$-triple.
\end{theorem}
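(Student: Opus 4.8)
My plan follows the classical argument for Jacobson--Morozov, in three steps, of which the last is the genuinely hard one.

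\textbf{Step 1 (building a candidate neutral element).} Since $X$ is nilpotent, $\mathrm{ad}\,X$ is a nilpotent endomorphism of $\g$. By $\mathrm{ad}$-invariance of the Killing form $\kappa$ of the semisimple algebra $\g$, the operator $\mathrm{ad}\,X$ is skew for $\kappa$, so $\mathrm{Im}(\mathrm{ad}\,X)=(\ker\mathrm{ad}\,X)^{\perp}=\z_\g(X)^{\perp}$. For $Z\in\z_\g(X)$ the operators $\mathrm{ad}\,X$ and $\mathrm{ad}\,Z$ commute with $\mathrm{ad}\,X$ nilpotent, so $\kappa(X,Z)=\mathrm{tr}(\mathrm{ad}\,X\,\mathrm{ad}\,Z)=0$; hence $X\in\z_\g(X)^{\perp}=\mathrm{Im}(\mathrm{ad}\,X)$, i.e.\ $X=[X,B]$ for some $B\in\g$, and $H_1:=-2B$ satisfies $[H_1,X]=2X$. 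Replacing $H_1$ by the semisimple part of its abstract Jordan decomposition in $\g$ preserves $[H_1,X]=2X$: from $[\mathrm{ad}\,H_1,\mathrm{ad}\,X]=2\,\mathrm{ad}\,X$ one sees $\mathrm{ad}\,X$ is an eigenvector of eigenvalue $2$ for $\mathrm{ad}(\mathrm{ad}\,H_1)$ on $\mathrm{End}(\g)$, hence also for its semisimple part $\mathrm{ad}(\mathrm{ad}\,(H_1)_s)$, and $\g$ has trivial centre. So we may assume $\mathrm{ad}\,H_1$ semisimple; write $\g=\bigoplus_j\g_j$ for its eigenspace decomposition, so $X\in\g_2$ and $\mathrm{ad}\,X\colon\g_j\to\g_{j+2}$.

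\textbf{Step 2 (completing the triple, granted a good $H$).} Suppose $H$ satisfies $[H,X]=2X$, $\mathrm{ad}\,H$ is semisimple, \emph{and} $H\in\mathrm{Im}(\mathrm{ad}\,X)$. Choose $Y_0$ with $[X,Y_0]=H$ and let $Y$ be the $\g_{-2}$-eigencomponent of $Y_0$. Writing $Y_0=\sum_j(Y_0)_j$ and comparing $\mathrm{ad}\,H$-weights in $[X,Y_0]=H\in\g_0$ gives $[X,Y]=H$, the other eigencomponents of $Y_0$ being annihilated by $\mathrm{ad}\,X$. As $Y\in\g_{-2}$ we have $[H,Y]=-2Y$, so with $[H,X]=2X$ and $X\neq0$ the set $\{X,H,Y\}$ is an $\s\l_2$-triple. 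Thus everything reduces to the next step.

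\textbf{Step 3 (the main obstacle: placing $H$).} The hard part, which I expect to dominate the proof, is to arrange that the element of Step 1 can be chosen in $\mathrm{Im}(\mathrm{ad}\,X)$ as well; a generic solution of $[H,X]=2X$ with $\mathrm{ad}\,H$ semisimple need not extend to a triple (e.g.\ in $\s\l_2(\C)\oplus\s\l_2(\C)$ with $X$ in a single summand). One moves $H$ within the affine space $\{H'\colon[H',X]=2X\}=H+\z_\g(X)$. Since $\mathrm{Im}(\mathrm{ad}\,X)=\z_\g(X)^{\perp}$, this coset meets $\mathrm{Im}(\mathrm{ad}\,X)$ if and only if $\kappa(H,r)=0$ for every $r$ in the radical of $\kappa|_{\z_\g(X)}$; writing such an $r$ as $[X,a]$ one finds $\kappa(H,r)=2\kappa(X,a)$, so it suffices that $\kappa(X,a)=0$ whenever $[X,a]$ lies in that radical. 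Establishing this is where the nilpotency of $X$ enters essentially, through the structure of the centralizer $\z_\g(X)$ (its reductive part and nilradical) and the restriction of $\kappa$ to it. Having secured a point of the coset in $\mathrm{Im}(\mathrm{ad}\,X)$, one passes to its semisimple part, correcting if necessary by an element of $\z_\g(X)\cap\z_\g(H)$, to recover simultaneously the three hypotheses of Step 2, which then concludes the argument. For the classical matrix algebras $\s\l_n(\C)$, $\s\o(n,\C)$, $\s\p(n,\C)$ one may alternatively argue directly from the Jordan form of $X$.
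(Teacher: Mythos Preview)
The paper does not give its own proof of this statement: Theorem~\ref{Jacobson-Morozov-alg} is recorded as a well-known background result and is simply cited from Collingwood--McGovern \cite[Theorem~9.2.1]{CoMc}. There is therefore no ``paper's proof'' to compare your argument against; the authors use Jacobson--Morozov only as a black box to guarantee the existence of an $\s\l_2$-triple through a given nilpotent element.

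As a standalone sketch, your Steps~1 and~2 are the standard reductions and are carried out correctly. Your Step~3, however, is explicitly left open: you reduce to showing $\kappa(X,a)=0$ whenever $[X,a]$ lies in the radical of $\kappa|_{\z_\g(X)}$, but you do not prove this, and the subsequent sentence (``pass to its semisimple part, correcting if necessary'') papers over a real circularity, since taking the semisimple part of an element of $\mathrm{Im}(\mathrm{ad}\,X)$ need not stay in $\mathrm{Im}(\mathrm{ad}\,X)$. In the usual proofs one instead fixes $H$ semisimple with $[H,X]=2X$ first (your Step~1), and then proves directly that such an $H$ already lies in $[X,\g_{-2}]$ by showing $\kappa(H,Z)=0$ for every $Z\in\g_0\cap\z_\g(X)$; this is exactly the point where the structure of $\z_\g(X)$ (or an induction on $\dim\g$, as in Bourbaki) is invoked. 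So your outline is the classical one, but the crux---Step~3---remains to be filled in.
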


\section{Adjoint reality for  semisimple elements }\label{sec-reality-ss-elts-complex-g}
Let $G$ be  a complex simple Lie  group with Lie   algebra $\g$. Let $H\in \g$ be a semisimple element, and $ \h $ be a Cartan subalgebra in $ \g $.  Further we may assume $H\in \h$ as $ {\rm Ad}(g)(H)\in \h $ for some $ g\in G $.

\subsection{Semi-simple elements in $ \s\l_n(\C) $}\label{sec-semisimple-elt-sl-n-c}
Let $ \g:=\s\l_n(\C) $ and $ \h$ be the subalgebra consisting of all diagonal matrices in $ \g $. Then $ \h $ is  a Cartan subalgebra in $ \g $.  

\begin{lemma}
Let  $ H$ be a semisimple element in  $ \g\l_n(\C)$. Then $ H$ is   ${\rm Ad}_{{\rm GL }_n(\C)}$-real  in  $ \g\l_n(\C)$ if and only if  {        whenever $ \lambda  $ is an eigenvalue of $ H $,   $ -\lambda  $ is also an eigenvalue of $ H $ with the same multiplicity}.
\end{lemma}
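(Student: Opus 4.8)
The plan is to unwind the definition of ${\rm Ad}_{{\rm GL}_n(\C)}$-reality and reduce it to a standard fact about similarity of diagonalizable matrices. Since ${\rm Ad}(g)H = gHg^{-1}$ for $g \in {\rm GL}_n(\C)$, the element $H$ is ${\rm Ad}_{{\rm GL}_n(\C)}$-real precisely when $-H$ lies in the similarity class of $H$, i.e. when $-H = gHg^{-1}$ for some $g \in {\rm GL}_n(\C)$. As $H$ is semisimple, so is $-H$, and two semisimple (equivalently, diagonalizable) matrices in ${\rm M}_n(\C)$ are ${\rm GL}_n(\C)$-conjugate if and only if they have the same eigenvalues with the same multiplicities, equivalently the same characteristic polynomial. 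This standard classification of semisimple conjugacy classes is the only external input needed.

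For the forward direction I would observe that if $-H = gHg^{-1}$, then $H$ and $-H$ have the same spectrum counted with multiplicity; since the eigenvalues of $-H$ are exactly the $-\lambda$ for $\lambda$ an eigenvalue of $H$, and $\dim\ker(-H+\lambda\,{\rm I}_n) = \dim\ker(H-\lambda\,{\rm I}_n)$, the multiset of eigenvalues of $H$ is invariant under $\lambda \mapsto -\lambda$, which is the asserted condition. Conversely, assuming the eigenvalue condition, I would construct an explicit conjugating element: write $\C^n = \bigoplus_{\lambda} V_\lambda$ as the sum of eigenspaces of $H$; the hypothesis gives $\dim V_\lambda = \dim V_{-\lambda}$ for every $\lambda$, so one may pick linear isomorphisms $\phi_\lambda \colon V_\lambda \to V_{-\lambda}$ for $\lambda \neq 0$ with $\phi_{-\lambda} = \phi_\lambda^{-1}$, and set $\phi_0 = {\rm Id}_{V_0}$. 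The block map $g := \bigoplus_\lambda \phi_\lambda \in {\rm GL}_n(\C)$ then carries the $\lambda$-eigenspace of $H$ onto the $(-\lambda)$-eigenspace, so $gHg^{-1} = -H$; hence $-H \in \OC_H$ and $H$ is ${\rm Ad}_{{\rm GL}_n(\C)}$-real. This also records an explicit reverser $g \in R_{{\rm GL}_n(\C)}(H)$, in the spirit of the rest of the paper.

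I do not expect a genuine obstacle here; the only points needing a little care are invoking the classification of semisimple conjugacy classes consistently in both directions, and pairing the eigenspaces so that $g$ is invertible, in particular treating the $\lambda = 0$ eigenspace separately. The subsequent results for ${\s\o}(n,\C)$ and ${\s\p}(n,\C)$ will be variations on this argument, where the conjugating map must additionally respect the relevant bilinear form, which is where the real work of the paper lies.
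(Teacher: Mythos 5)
Your proof is correct, and it is exactly the standard argument the paper implicitly relies on: the paper states this lemma without proof, and your reduction to the fact that semisimple ${\rm GL}_n(\C)$-conjugacy classes are determined by the spectrum with multiplicities, together with the explicit eigenspace-swapping reverser (treating $V_0$ separately), is the natural fill-in and matches the explicit block constructions the paper uses in its later proofs.
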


\begin{remark}\label{semisimple-real-gln}
A semisimple element $ H \in  \g\l_n(\C) $  is ${\rm Ad}_{{\rm GL }_n(\C)}$-real  if and only if  $ H $  is  ${\rm Ad}_{{\rm SL }_n(\C)}$-real in $  \s\l_n(\C) $. The analogous statement is also true in the group  {      theoretic} sense; see \cite[p. 77]{OS}.
\end{remark}

\begin{theorem}
Every real semisimple element in $ {\rm Lie(\,PSL }_n(\C) )$ is strongly ${\rm Ad}_{{\rm PSL }_n(\C)}$-real.
\end{theorem}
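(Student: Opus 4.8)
The plan is to reduce $H$ to a convenient normal form and then write down the reversing element by hand. Since being ${\rm Ad}_{{\rm PSL}_n(\C)}$-real, and being strongly so, are both preserved under ${\rm Ad}_{{\rm PSL}_n(\C)}$-conjugation, and since every semisimple element is ${\rm Ad}$-conjugate to a diagonal matrix, I may assume $H\in\g\l_n(\C)$ is diagonal (with $H\in\s\l_n(\C)=\mathrm{Lie}({\rm PSL}_n(\C))$). Note that ``${\rm Ad}_{{\rm PSL}_n(\C)}$-real'' and ``${\rm Ad}_{{\rm SL}_n(\C)}$-real'' coincide, the two adjoint actions having the same image. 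Hence, by the preceding Lemma together with Remark~\ref{semisimple-real-gln}, the hypothesis that $H$ is real says precisely that its nonzero eigenvalues occur in pairs $\lambda,-\lambda$ of equal multiplicity. Grouping eigenspaces accordingly, there is a basis in which
$$
H \;=\; \Big(\bigoplus_{i=1}^{k}\begin{pmatrix}\lambda_i {\rm I}_{m_i} & 0\\ 0 & -\lambda_i {\rm I}_{m_i}\end{pmatrix}\Big)\;\oplus\; 0_{m_0},
$$
with $\lambda_i\neq 0$ and $2\sum_{i=1}^{k} m_i + m_0 = n$, where the $0$-eigenspace block $0_{m_0}$ may be absent.

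Next I would take the evident reverser: let $\tau_0$ be the block matrix that interchanges the $\lambda_i$-eigenspace with the $(-\lambda_i)$-eigenspace for each $i$ and is the identity on the $0$-block,
$$
\tau_0 \;=\; \Big(\bigoplus_{i=1}^{k}\begin{pmatrix} 0 & {\rm I}_{m_i} \\ {\rm I}_{m_i} & 0 \end{pmatrix}\Big)\;\oplus\; {\rm I}_{m_0}\ \in\ {\rm GL}_n(\C).
$$
A one-line computation on each block gives $\tau_0^2={\rm I}_n$ and $\tau_0 H\tau_0^{-1}=-H$. The only defect is that $\det\tau_0=(-1)^{\sum_{i=1}^{k} m_i}$, so $\tau_0$ need not lie in ${\rm SL}_n(\C)$ and hence does not directly yield an element of ${\rm PSL}_n(\C)$.

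The step I expect to be the crux — and the reason the statement is for ${\rm PSL}_n(\C)$ rather than ${\rm SL}_n(\C)$ — is correcting this determinant by a central scalar, which is permitted in the quotient. Choose $c\in\C^\times$ with $c^n=(-1)^{\sum_{i=1}^{k} m_i}$ and set $\tau:=c\,\tau_0$. Then $\det\tau=c^n\cdot(-1)^{\sum_{i=1}^{k} m_i}=1$, so $\tau\in{\rm SL}_n(\C)$, while $\tau^2=c^2{\rm I}_n$ with $(c^2)^n=(c^n)^2=1$, so $\tau^2$ lies in the center of ${\rm SL}_n(\C)$. Therefore the image $\bar\tau\in{\rm PSL}_n(\C)$ satisfies $\bar\tau^2=\overline{{\rm Id}}$, and since scalar matrices act trivially under ${\rm Ad}$ we get ${\rm Ad}(\bar\tau)H={\rm Ad}(\tau_0)H=-H$. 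Thus $H$ is strongly ${\rm Ad}_{{\rm PSL}_n(\C)}$-real (the case $H=0$ being trivial), and $\bar\tau$ is an explicit element of the reverser $R_{{\rm PSL}_n(\C)}(H)$. I would close by remarking that the quotient is genuinely needed here: inside ${\rm SL}_n(\C)$ itself an honest involution reversing $H$ need not exist when $\sum_{i=1}^{k} m_i$ is odd and $H$ has no zero eigenvalue, the sort of technicality met for type $A_n$ in \cite{GLM3}.
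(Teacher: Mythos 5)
Your proposal is correct and follows essentially the same route as the paper: diagonalize $H$ into paired $\pm\lambda$ blocks (plus a possible zero block) and exhibit an explicit block-swap reverser whose square is central, so that its image in ${\rm PSL}_n(\C)$ is a genuine involution conjugating $H$ to $-H$. The paper's $\sigma={\rm diag}({\rm J}_m,\sqrt{-1}\,{\rm I}_s)$ with $\sigma^2=-{\rm I}_n$ plays exactly the role of your $c\,\tau_0$ with central square, so the two arguments differ only in the choice of central correction.
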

\begin{proof}
Let $ H\in  {\rm Lie(\,PSL }_n(\C) )$ be a ${\rm Ad}_{{\rm PSL }_n(\C)}$-real semisimple element.  We may assume $ H= {\rm diag}(h_1, \ldots,h_m, -h_1,\ldots,-h_m,0,\ldots,0) $.  Then $ \sigma: ={\rm diag}( {\rm J}_m, \,\sqrt{-1}\,{\rm I}_s) $   will conjugate $ H $ and $ -H $, where $ 2m+s=n $,  and $ {\rm J}_n$  is as in  \eqref{defn-I-pq-J-n}. 
\end{proof}

\begin{example}\label{exp-diag-x-x}
Consider the semi-simple element $ H={\rm diag}(x_1, - x_1)$. Let $ g\in {\rm GL}_2(\C) $ so that $ gH = -Hg $. Then $ g$  is of the form $\left(\begin{smallmatrix}
0&b\\
c&0
\end{smallmatrix}\right) $. Hence $ H $ is  a strongly ${\rm Ad}_{{\rm GL}_2(\C)}$-real element  in $\g\l_{2}(\C)$,         and is an  ${\rm Ad}_{{\rm SL}_2(\C)}$-real element  but not strongly    ${\rm Ad}_{{\rm SL}_2(\C)}$-real in $\s\l_{2}(\C)$.   Note that $ g\in {\rm Sp}(1,\C) $ if $ bc=-1 $. Similarly,  $ H $ is  a  ${\rm Ad}_{{\rm Sp}(1,\C)}$-real element  but not strongly ${\rm Ad}_{{\rm Sp}(1,\C)}$-real        element  in $\s\p(1,\C)$.  \qed
\end{example}

The next  result  classifies  strong ${\rm Ad}_{{\rm SL}_n(\C)}$-reality in the Lie algebra $ \s\l_{n}(\C) $; see  \cite[Proposition 2.5]{GLM3}.  Here we provide a detailed proof.

 \begin{theorem}
 		\label{Pro-SS-sl-n-c}
An  ${\rm Ad}_{{\rm SL}_n(\C)}$-real semisimple element in $ \s\l_n(\C) $ is strongly ${\rm Ad}_{{\rm SL}_n(\C)}$-real if and only if either $ 0 $ is an eigenvalue or $ n\not \equiv 2 \pmod 4$.
 \end{theorem}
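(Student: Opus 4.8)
The plan is to reduce to the explicit diagonal form of an $\mathrm{Ad}_{\mathrm{SL}_n(\C)}$-real semisimple element and to analyze when a reversing element can be chosen to be an involution of determinant $1$. First I would normalize: since $H$ is semisimple and real, we may assume (after conjugation by some $g\in\mathrm{SL}_n(\C)$, which does not affect strong reality) that $H=\mathrm{diag}(h_1,\dots,h_m,-h_1,\dots,-h_m,0,\dots,0)$ with the $h_i\neq 0$ grouped so that $2m+s=n$ where $s\geq 0$ is the multiplicity of the eigenvalue $0$. By the Lemma on $\g\l_n(\C)$ quoted above and Remark \ref{semisimple-real-gln}, such a normal form exists precisely because $H$ is real. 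The reverser set $R_{\mathrm{GL}_n(\C)}(H)$ consists of block matrices permuting eigenspaces $E_\lambda\leftrightarrow E_{-\lambda}$; restricting to the nonzero eigenvalues, any $r\in R_{\mathrm{GL}_n(\C)}(H)$ has, on the part supported by $\pm h_i$ eigenspaces, block-antidiagonal shape $\left(\begin{smallmatrix}0&B\\ C&0\end{smallmatrix}\right)$, and is free (any invertible matrix) on the $0$-eigenspace.

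**The determinant and involution conditions.**

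Next I would compute how the constraints "involution" and "determinant $1$" interact. If $0$ is an eigenvalue ($s\geq 1$), I can build a reversing involution easily: on each $2$-dimensional pair of eigenspaces use $\left(\begin{smallmatrix}0&I\\ I&0\end{smallmatrix}\right)$ (an involution with determinant $(-1)^{\dim E_{h_i}}$), and on the $0$-eigenspace use a diagonal $\pm 1$ matrix of the appropriate sign to fix the total determinant to $1$ — this is possible precisely because $s\geq 1$ gives a free $\mathrm{GL}_s$ factor containing involutions of both determinant signs (when $s\geq 1$). So the $0$-eigenvalue case always yields a strong $\mathrm{Ad}_{\mathrm{SL}_n}$-real element. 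When $0$ is not an eigenvalue, $n=2m$ and any reversing $\tau$ is block-antidiagonal $\tau=\left(\begin{smallmatrix}0&B\\ C&0\end{smallmatrix}\right)$ with respect to the splitting of $\C^n$ into the span of the $+h_i$ eigenspaces and the span of the $-h_i$ eigenspaces. Then $\tau^2=\mathrm{Id}$ forces $CB=I$ and $BC=I$, i.e. $C=B^{-1}$, so $\det\tau=(-1)^m\det(B)\det(B^{-1})=(-1)^m$. Hence a reversing involution exists in $\mathrm{SL}_n(\C)$ iff $(-1)^m=1$, i.e. $m$ even, i.e. $n=2m\equiv 0\pmod 4$; equivalently, no reversing involution of determinant $1$ exists exactly when $n\equiv 2\pmod 4$ and $0$ is not an eigenvalue. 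That establishes the "only if" and completes the "if".

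**What needs care.**

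The main obstacle, and the step I would write out most carefully, is the structural claim that every reversing element $\tau$ (not merely in a convenient conjugacy class but any $\tau$ with $\tau H\tau^{-1}=-H$) must have the block-antidiagonal shape forcing $\det\tau=(-1)^m$ when $0\notin\mathrm{spec}(H)$. This requires noting that $\tau$ must send the $\lambda$-eigenspace of $H$ to the $(-\lambda)$-eigenspace, and — crucially — that we may group the $h_i$ so that the "$+$" block and "$-$" block have equal dimension $m$ (this uses the equal-multiplicity condition from the Lemma); one must be slightly careful that within a single eigenvalue $\lambda$ with $\lambda=-\lambda'$ impossible over distinct indices, the pairing $E_\lambda\leftrightarrow E_{-\lambda}$ is a genuine bijection of summands so the ambient space splits as $V\oplus V$ with $\dim V=m$ and $\tau$ swaps the factors. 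Once that splitting is justified, the determinant computation $\det\left(\begin{smallmatrix}0&B\\ B^{-1}&0\end{smallmatrix}\right)=(-1)^m$ is immediate. I would also remark that the $\mathrm{PSL}_n$ statement proved just above shows the obstruction is genuinely about lifting to $\mathrm{SL}_n$ rather than about reality itself, and that Example \ref{exp-diag-x-x} is the base case $n=2$, $m=1$ of the negative direction.
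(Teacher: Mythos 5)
Your proposal is correct and takes essentially the same route as the paper: explicit swap-type involutions (with a sign adjustment on the zero eigenspace) for the ``if'' direction, and for the ``only if'' direction the parity computation showing that any reversing involution of $H$ with $0\notin\mathrm{spec}(H)$ has determinant $(-1)^m$, forcing $m$ even. Your packaging via the splitting $\C^n=V_+\oplus V_-$ and the block-antidiagonal form $\left(\begin{smallmatrix}0&B\\ B^{-1}&0\end{smallmatrix}\right)$ is just a reorganization of the paper's basis $\{e_j,\, g e_j\}$ argument, and your explicit care in grouping the eigenvalues so that no $h_i=-h_j$ occurs among the chosen representatives is a welcome (if minor) sharpening of a point the paper leaves implicit.
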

\begin{proof}
Let $ H\in  \s\l_n(\C)$ be a ${\rm Ad}_{{\rm SL}_n(\C)}$-real semisimple element. If $ 0 $ is an eigenvalue of $ H $, then using Example \ref{exp-diag-x-x} it follows that $ H $ is strongly ${\rm Ad}_{{\rm SL}_n(\C)}$-real. Suppose $ 0 $ is not an eigenvalue of $ H $, and  $ n\not \equiv 2 \pmod 4$, then   $ n=4m $ for $ m\in \N $ and we can assume $ H= {\rm diag}(h_1, \ldots,h_{2m}, -h_1,\ldots,-h_{2m}) $. Then, $ H $ and $ -H $ will be conjugated by $ g=\left( \begin{smallmatrix}
&{\rm I}_{2m}\\
{\rm I}_{2m}&\\
\end{smallmatrix}\right) $. 

Next assume that $ H $ is strongly ${\rm Ad}_{{\rm SL}_n(\C)}$-real and $ 0 $ is not an eigenvalue of $ H $. We will show $ n\in 4\N$. Without loss of generality, we can assume $ H= {\rm diag}(h_1, \ldots,h_m, -h_1,\ldots,-h_m) $ and $ gH=-Hg $ for some involution. 	Let $ e_j $ be the standard column vector in $ \C^n $ with $ 1 $ in $ j^{\rm th} $ place and $ 0 $ elsewhere. For $ 1\leq j\leq m $,
$$  
Hge_j\,=\, -gHe_j\,=\, -gh_je_j \,=\, -h_jge_j.
$$
Let $ V_j:=\C e_j \oplus \C ge_{j} $ and $\CC_j :=\{e_j, g e_{j} \}$. Since $ g^2 = {\rm I}_2 $, $ g(V_j) \subset V_j$. Then $ \{e_j, ge_j\mid 1\leq j\leq m \} $
 forms a basis of $ \C^{2m} $. 
 Set $\CC:=   \CC_1\vee\cds \vee  \CC_m$.  Then the matrix $[g]_ \CC $ is a $ 2\times2 $ block-diagonal matrix and $ \det \,[g]_ \CC = (-1)^m $.  As $ \det g =1 $, it follows that $ m\in 2\N $.  This completes the proof.
\end{proof}

\subsection{Semi-simple elements in $ \o(n,\C) $ and $\s\o(n,\C)$}\label{sec-semisimple-elt-so-n-c}
Up to conjugacy, any semisimple element in $ \o(n,\C) $ or $ \s\o(n,\C) $ belongs to the following Cartan  subalgebra $ \h $, see \cite[p. 127]{K}:
\begin{align}\label{semisimple-elt-so-n-c}
\h:= \begin{cases}
{\rm diag}(H_1, \ldots, H_m,0 ) & {\rm if } \, n=2m+1\\\
{\rm diag}(H_1, \ldots, H_m) & {\rm if } \, n=2m
\end{cases}, {\rm \, where \, }  H_j = \begin{pmatrix}
0 & x_j\\
-x_j &0
\end{pmatrix}, x_j\in \C.
\end{align}

\begin{example}\label{exp-so2c}
Consider $ H=\left( \begin{smallmatrix}
	0  & x\\
	-x &0
\end{smallmatrix} \right)\in \s\o(2,\C)=\o(2,\C) $, where $ x\in \C $. Let $ g \in {\rm GL}_2(\C) $ so that  $ gHg^{-1} =-H$. Then $ g$ is of the form  $ \left(\begin{smallmatrix}
	a & b\\
	b& -a
	\end{smallmatrix}  \right)$.
Hence,  $\det g = -a^2-b^2$, and   $gg^t= g^2=(-\det g){\rm I}_2 $. Thus, one can choose  $ g\in {\rm O}(2,\C) $ with $ \det g =-1 $. This shows that $ H $ is  a   {       ${\rm Ad}_{{\rm O}(n,\C) }$-real, as well as strongly ${\rm Ad}_{{\rm O}(n,\C) }$-real, }  element in $ {\o}(2,\C)$ but not   {       ${\rm Ad}_{{\rm SO}(n,\C) }$-real }  in $ {\s\o}(2,\C)$.       \qed
\end{example} 

The following result follows  from the construction done in the above example.
\begin{lemma}
	Every semisimple element in $ \o(n, \C) $ is  ${\rm Ad}_{{\rm  O}(n,\C) }$-real.
\end{lemma}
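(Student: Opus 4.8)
The plan is to reduce an arbitrary semisimple element of $\o(n,\C)$ to the normal form in \eqref{semisimple-elt-so-n-c} and then build a reversing element living in ${\rm O}(n,\C)$ block by block, using the $2\times 2$ computation of Example~\ref{exp-so2c} as the fundamental building block. First I would invoke the remark preceding \eqref{semisimple-elt-so-n-c}: up to ${\rm Ad}_{{\rm O}(n,\C)}$-conjugacy we may assume $H = {\rm diag}(H_1,\dots,H_m)$ (with an extra $0$ in the odd case), where $H_j = \left(\begin{smallmatrix} 0 & x_j \\ -x_j & 0\end{smallmatrix}\right)$. Since conjugating by an element of ${\rm O}(n,\C)$ does not affect ${\rm Ad}_{{\rm O}(n,\C)}$-reality, it suffices to exhibit $g \in {\rm O}(n,\C)$ with $gHg^{-1} = -H$ for $H$ in this normal form.

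The key step is then to assemble $g$ blockwise. For each $2\times 2$ block $H_j$, Example~\ref{exp-so2c} provides $g_j \in {\rm O}(2,\C)$ with $g_j H_j g_j^{-1} = -H_j$ and $\det g_j = -1$ (concretely one may take $g_j = \left(\begin{smallmatrix} 1 & 0 \\ 0 & -1\end{smallmatrix}\right)$, which conjugates $H_j$ to $-H_j$ and lies in ${\rm O}(2,\C)$). Then $g := {\rm diag}(g_1,\dots,g_m)$ (appending a $1$ in the last coordinate when $n = 2m+1$, noting that the $0$ block is trivially reversed) satisfies $g^t g = {\rm I}_n$, hence $g \in {\rm O}(n,\C)$, and $gHg^{-1} = -H$ by construction. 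This shows $-H \in \OC_H$ under the ${\rm O}(n,\C)$-action, which is precisely ${\rm Ad}_{{\rm O}(n,\C)}$-reality.

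I do not expect a serious obstacle here; the statement is essentially a direct corollary of Example~\ref{exp-so2c}, as the text itself signals ("The following result follows from the construction done in the above example"). The only point requiring a moment's care is the bookkeeping in the odd-dimensional case $n = 2m+1$: one must check that the trailing zero entry (the fixed coordinate of the Cartan subalgebra in \eqref{semisimple-elt-so-n-c}) causes no trouble, which is immediate since $g$ fixes that coordinate and $H$ annihilates it, so the block-diagonal $g$ still lies in ${\rm O}(n,\C)$ and still conjugates $H$ to $-H$. One should also remark that this argument says nothing about reality inside ${\rm SO}(n,\C)$ — indeed Example~\ref{exp-so2c} shows it can fail there — so the statement is genuinely about the full orthogonal group; the finer analysis of ${\rm Ad}_{{\rm SO}(n,\C)}$-reality and strong reality is deferred to Theorem~\ref{thm-sonC-semisimple}.
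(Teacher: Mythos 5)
Your proposal is correct and takes essentially the same route as the paper: reduce to the normal form \eqref{semisimple-elt-so-n-c} and reverse each $2\times 2$ block by a determinant $-1$ element of ${\rm O}(2,\C)$ as in Example~\ref{exp-so2c}, assembling these block-diagonally (with a $1$ appended in the odd case). This is precisely what the paper's one-line proof, ``follows from the construction done in the above example,'' intends, and your extra remarks about the trailing zero block and the failure inside ${\rm SO}(n,\C)$ are consistent with the paper's subsequent Theorem~\ref{thm-sonC-semisimple}.
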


Next we  investigate strongly ${\rm Ad}_{{\rm  O}(n,\C) }$-real elements in $\o(n,\C)$.

\begin{proposition}\label{Onc}
Every semisimple element in $ \o(n, \C) $ is strongly ${\rm Ad}_{{\rm O}(n,\C) }$-real.
\end{proposition}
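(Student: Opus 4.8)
The plan is to build on the previous two results: every semisimple $H\in\o(n,\C)$ is $\mathrm{Ad}_{\mathrm{O}(n,\C)}$-real (the Lemma just stated), and the explicit $2\times 2$ reversing involution exhibited in Example~\ref{exp-so2c} for the block $\left(\begin{smallmatrix}0&x\\-x&0\end{smallmatrix}\right)$. Since, up to conjugacy, any semisimple $H\in\o(n,\C)$ lies in the Cartan subalgebra $\h$ of \eqref{semisimple-elt-so-n-c}, I would write $H=\mathrm{diag}(H_1,\dots,H_m)$ (with a trailing $0$ if $n=2m+1$), where each $H_j=\left(\begin{smallmatrix}0&x_j\\-x_j&0\end{smallmatrix}\right)$. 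For each $2\times 2$ block, Example~\ref{exp-so2c} produces $g_j\in\mathrm{O}(2,\C)$ with $\det g_j=-1$ and $g_j H_j g_j^{-1}=-H_j$; explicitly one may take $g_j=\left(\begin{smallmatrix}1&0\\0&-1\end{smallmatrix}\right)$, which works uniformly for every block regardless of $x_j$ and is manifestly an involution. In the odd case the trailing $0$ entry is reversed by $\pm 1$ on that coordinate.

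The construction then is simply $\sigma:=\mathrm{diag}(g_1,\dots,g_m)$ (with an extra entry $\pm 1$ in the last slot when $n=2m+1$). I would verify three things. First, $\sigma H\sigma^{-1}=-H$: this is immediate from the block structure since $\sigma$ and $H$ are simultaneously block diagonal and each block satisfies $g_jH_jg_j^{-1}=-H_j$. Second, $\sigma^2=\mathrm{I}_n$: each $g_j=\left(\begin{smallmatrix}1&0\\0&-1\end{smallmatrix}\right)$ squares to $\mathrm{I}_2$, and the trailing $\pm 1$ squares to $1$. Third, $\sigma\in\mathrm{O}(n,\C)$: each $g_j^tg_j=\mathrm{I}_2$ by the computation $gg^t=g^2=(-\det g)\mathrm{I}_2=\mathrm{I}_2$ in Example~\ref{exp-so2c}, so $\sigma^t\sigma=\mathrm{I}_n$. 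Hence $\sigma\in\mathrm{O}(n,\C)$ is an involution reversing $H$, which is exactly the assertion that $H$ is strongly $\mathrm{Ad}_{\mathrm{O}(n,\C)}$-real.

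One subtlety worth a sentence: the result is only up to conjugacy, but strong adjoint reality is clearly conjugation-invariant (if $\sigma$ reverses $H$ then $g\sigma g^{-1}$ reverses $gHg^{-1}$, and conjugation preserves being an involution and membership in $\mathrm{O}(n,\C)$), so it suffices to treat $H\in\h$. I do not anticipate a genuine obstacle here — the only mild point to be careful about is the parity/determinant bookkeeping that caused trouble in Theorem~\ref{Pro-SS-sl-n-c}, but that obstruction disappears precisely because $\mathrm{O}(n,\C)$, unlike $\mathrm{SL}_n(\C)$, contains elements of determinant $-1$; here each block contributes $\det g_j=-1$ freely, so no congruence condition on $n$ arises. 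Thus the proof is a direct block-diagonal assembly of the $2\times 2$ case, and the ``hard part'' is really just confirming that the naive diagonal involution simultaneously lies in $\mathrm{O}(n,\C)$, squares to the identity, and reverses every block — all of which follow from the identities recorded in Example~\ref{exp-so2c}.
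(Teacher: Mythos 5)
Your proposal is correct and is essentially the paper's own argument: the paper likewise reduces to elements of the Cartan subalgebra $\h$ and constructs the reversing involution blockwise from Example~\ref{exp-so2c}, only stating this in one line where you spell out the choice $g_j=\mathrm{diag}(1,-1)$ and the verifications. Your added remarks (conjugation-invariance of strong reality, and the absence of any determinant obstruction in $\mathrm{O}(n,\C)$) are accurate and fill in exactly the details the paper leaves implicit.
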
	 

\begin{proof}
Enough to consider  the elements of $\h $. For the elements in $ \h $ one can easily construct   {      the }  required involution  using the Example \ref{exp-so2c}.
\end{proof}

Now we classify the strongly Ad$_{{\rm SO}(n,\C)}$-real semisimple element in  $\s\o(n,\C) $. 

\begin{theorem}\label{thm-sonC-semisimple} 
Let $ H\in \s\o(n,\C) $ be a  semisimple element. Then $ H $
 is a strongly Ad$_{{\rm SO}(n,\C)}$-real if and only if either $ 0$ is an eigenvalue of $ H $ or  $ n \not\equiv 2 \pmod 4$.   
\end{theorem}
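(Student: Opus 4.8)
The strategy follows the template established for $\mathfrak{sl}_n(\mathbb{C})$ in Theorem~\ref{Pro-SS-sl-n-c}, adapted to the orthogonal setting. A semisimple $H \in \mathfrak{so}(n,\mathbb{C})$ may be taken in the Cartan subalgebra $\mathfrak{h}$ of \eqref{semisimple-elt-so-n-c}, so $H = \mathrm{diag}(H_1,\dots,H_m)$ (plus a trailing $0$ if $n=2m+1$) with $H_j = \left(\begin{smallmatrix} 0 & x_j \\ -x_j & 0\end{smallmatrix}\right)$; note the eigenvalues of $H$ come in pairs $\pm i x_j$, so every semisimple element of $\mathfrak{so}(n,\mathbb{C})$ is automatically $\mathrm{Ad}_{\mathrm{SO}(n,\mathbb{C})}$-real (the question is only \emph{strong} reality). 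First I would dispose of the ``if'' direction. If $n \not\equiv 2 \pmod 4$, split into the case $n$ odd and $n \equiv 0 \pmod 4$; in each case one builds an explicit involution $\tau \in \mathrm{SO}(n,\mathbb{C})$ with $\tau H \tau^{-1} = -H$ by assembling $2\times 2$ or $4\times 4$ blocks acting on the coordinate pairs, using the block from Example~\ref{exp-so2c} (which gives $g \in \mathrm{O}(2,\mathbb{C})$, $\det g = -1$, $g^2 = \mathrm{I}_2$, conjugating a single $H_j$ to $-H_j$) and pairing these blocks two at a time so that the determinants $(-1)$ multiply to $+1$. When $n$ is odd the extra fixed coordinate lets one insert a single sign flip to correct the determinant while staying an involution. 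If $0$ is an eigenvalue of $H$ (which forces $n$ even, say $n = 2m$, with some $x_j = 0$), one similarly has at least one $2$-plane on which $H$ acts as $0$, and on that plane one may use $\mathrm{diag}(1,-1)$ to absorb a stray $-1$; combined with the block constructions this again yields an involution in $\mathrm{SO}(n,\mathbb{C})$.

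For the ``only if'' direction, assume $H$ is strongly $\mathrm{Ad}_{\mathrm{SO}(n,\mathbb{C})}$-real, that $0$ is \emph{not} an eigenvalue, and derive $n \equiv 0 \pmod 4$. Since $0$ is not an eigenvalue, $n = 2m$ and every $x_j \neq 0$. Let $\tau \in \mathrm{SO}(n,\mathbb{C})$ be an involution with $\tau H = -H\tau$, so $\tau$ maps the $ix_j$-eigenspace to the $-ix_j$-eigenspace and vice versa. The analysis proceeds eigenspace by eigenspace: grouping the indices $j$ by the value of $x_j^2$, $\tau$ must permute among the corresponding eigenspaces, and on each group the situation reduces to a block. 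Working over $\mathbb{C}$, decompose $\mathbb{C}^n$ into $\tau$-invariant $2$-dimensional subspaces $V_j = \mathbb{C}v_j \oplus \mathbb{C}\tau v_j$ where $v_j$ spans (a line in) the $ix_j$-eigenspace, exactly as in the proof of Theorem~\ref{Pro-SS-sl-n-c}; on each such plane $[\tau]$ is an off-diagonal $2\times 2$ matrix, contributing $-1$ to the determinant. Hence $\det \tau = (-1)^{n/2} = (-1)^m$, and $\det \tau = 1$ forces $m$ even, i.e. $n \equiv 0 \pmod 4$.

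**The main obstacle.** The delicate point is that an element of $\mathrm{SO}(n,\mathbb{C})$ must preserve the symmetric bilinear form $\langle \cdot,\cdot\rangle$ given by $\mathrm{I}_n$, so one cannot choose the invariant decomposition $\mathbb{C}^n = \bigoplus V_j$ freely — the planes $V_j$ interact with the orthogonality structure, and the $\pm ix_j$-eigenspaces of $H$ are isotropic (since $H$ is skew-symmetric, $\langle Hu,v\rangle = -\langle u, Hv\rangle$ forces eigenvectors for eigenvalues $\mu,\nu$ with $\mu + \nu \neq 0$ to be orthogonal, so each eigenspace pairs nondegenerately only with its opposite). This means the determinant bookkeeping has to be done with respect to a basis adapted to the form, and one must check that $\tau$ restricted to the span of a matched pair of isotropic eigenlines — a hyperbolic plane — still has determinant $-1$ as a map preserving that plane's form. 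I expect this is where the argument needs the most care: showing that the constraint ``$\tau \in \mathrm{O}(n,\mathbb{C})$, $\tau^2 = \mathrm{I}$, $\tau$ swaps $E_{ix_j}$ and $E_{-ix_j}$'' forces $\det(\tau|_{E_{ix_j} \oplus E_{-ix_j}}) = (-1)^{\dim E_{ix_j}}$, so that the total determinant is $(-1)^{m}$ with $m = n/2$ counting eigenvalue multiplicities with the correct parity. Once this local computation is pinned down, both directions close up, and the same explicit involutions constructed in the ``if'' direction furnish the reversing elements, so — as remarked in the introduction — one simultaneously obtains a description of $R_{\mathrm{SO}(n,\mathbb{C})}(H)$.
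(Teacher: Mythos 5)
Your proposal is correct and is essentially the paper's own argument: the same $\mathrm{diag}(1,-1)$-block involutions (with the zero eigenspace or the extra odd coordinate absorbing a stray $-1$ in the determinant) for the ``if'' direction, and the same decomposition of $\C^n$ into two-dimensional $\tau$-invariant planes spanned by an eigenvector and its $\tau$-image, giving $\det\tau=(-1)^{n/2}$ and hence $n\equiv 0\pmod 4$, for the ``only if'' direction. The ``obstacle'' you flag largely dissolves because the determinant is basis-independent (the paper simply computes it in the eigen-adapted basis, with no need for compatibility with the symmetric form), and your parenthetical that a zero eigenvalue forces $n$ even is literally false (odd $n$ always has $0$ as an eigenvalue), though harmless since odd $n$ is already covered by your $n\not\equiv 2\pmod 4$ case.
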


\begin{proof}
Since $H$ is semisimple, there exists $ \sigma\in {\rm SO}(n,\C) $ such that $ {\rm Ad}(\sigma)H = {\rm diag}(H_1,\dots, H_m, \underset{r\text{-many}}{\underbrace{0,\dots , 0}}) $, $ r\geq 1 $ where $ H_j $ is as in \eqref{semisimple-elt-so-n-c}. 

First assume that $0$ is an eigenvalue of $H$. 
Let $ g:= {\rm diag} (\underset{m\text{-many}}{\underbrace{ {\rm I}_{1,1} ,\dots , {\rm I}_{1,1} }}, {\rm I}_r ) $, where $ {\rm I}_{1,1}:= {\rm diag}(1,-1) $. 
Then ${\rm Ad}(\sigma^{-1} g \sigma)H =- H$. Note that  $ g^2 = {\rm I}_n$. If $ \det g =1$, then we are done. Otherwise replace $ g $ by $  {\rm diag} ( {\rm I}_{1,1} ,\dots , {\rm I}_{1,1} , {\rm I}_{r-1}, -1 ) $ to get {  the}  required involution in $ {\rm SO}(n,\C) $. Next assume that $ 0 $ is not an eigenvalue, and $ n\not\equiv 2 \pmod 4 $. Thus $n \equiv 0 \pmod 4 $, and hence up to conjugacy $ H $ is of the form   ${\rm diag}(H_1,\dots, H_{2k}) $, $ 4k=n $. By choosing an involution $g $ as above with $ m=2k, r=0 $, we have that $ H $ is strongly ${\rm Ad}_{{\rm SO}(n,\C)}$-real.

Next suppose that $H$ is strongly Ad$_{{\rm SO}(n,\C)}$-real. Further assume that $ 0$ is not an eigenvalue of $ H $, and $ H=   {\rm diag}(H_1,\dots, H_m) $  where $ H_j $ is as in \eqref{semisimple-elt-so-n-c}.  Let $ e_j $ be the standard column vector in $ \C^n$. Then $e_{2j-1}+\sqrt{-1} e_{2j} $ and  $g(e_{2j-1}+\sqrt{-1} e_{2j})$ are eigenvector of $ H $ corresponding to the eigenvalues $\sqrt{-1} x_j$ and $-\sqrt{-1} x_j$, respectively. 
Let $ V_j:= \C(e_{2j-1}+\sqrt{-1} e_{2j} )\oplus \C g(e_{2j-1}+\sqrt{-1} e_{2j}) $, and $ \CC_j:= \{ e_{2j-1}+\sqrt{-1} e_{2j}, \, g(e_{2j-1}+\sqrt{-1} e_{2j})\} $. Since $ g^2= {\rm I_2} $,  $ g(V_j ) \subset V_j$. Then $ \{ e_{2j-1}+\sqrt{-1} e_{2j}, \, g(e_{2j-1}+\sqrt{-1} e_{2j}) \, \mid \, 1\leq j\leq m \} $ forms a basis of $ \C^{n} $. Set $\CC:=  \CC_1\vee \dots \vee  \CC_m$.   Then the matrix $[g]_ \CC $ is a $ 2\times2 $ block-diagonal matrix and $ \det \,[g]_ \CC = (-1)^m $.  As $ \det g =1 $, it follows that $ m\in 2\N $.  This completes the proof.
\end{proof}

\subsection{Semi-simple elements in $ \s\p(n,\C) $}\label{sec-semisimple-elt-sp-n-c}
Recall that in Example \ref{exp-diag-x-x},  $ {\rm diag}(x,-x) $ is not strongly Ad$_{ {\rm Sp}(1,\C) }$-real in    but we can choose a conjugating involution $ g $ from $ {\rm PSp}(1,\C) $.
Thus, we will first consider the semisimple element in the Lie algebra of $ {\rm PSp}(n,\C) $.  Let  $ \h:= \{ {\rm diag } (h_1,\ldots, h_m, -h_1, \ldots, -h_m)  \mid h_j \in \C \} $. 
\begin{theorem}
Every semisimple element in ${\rm Lie\,( PSp} (n, \C) )$ is strongly {\rm Ad}$ _{{\rm PSp}(n,\C) } $-real. 
\end{theorem}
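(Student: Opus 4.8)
The plan is to exhibit a single explicit reversing involution that works for all semisimple elements at once. Since ${\rm Lie}({\rm PSp}(n,\C))$ is canonically identified with $\s\p(n,\C)$ and the centre of ${\rm Sp}(n,\C)$ acts trivially under ${\rm Ad}$, I would first use the fact (recalled at the start of this section) that any semisimple element is ${\rm Ad}$-conjugate into the fixed Cartan subalgebra $\h$; so choose $\sigma\in{\rm Sp}(n,\C)$ with ${\rm Ad}(\sigma)H=H'$, where $H'={\rm diag}(D,-D)\in\h$ for a diagonal matrix $D$. It then suffices to produce $\tau_0\in{\rm PSp}(n,\C)$ with $\tau_0^{\,2}={\rm Id}$ and ${\rm Ad}(\tau_0)H'=-H'$: indeed $\tau:=\sigma^{-1}\tau_0\sigma$ is then an involution in ${\rm PSp}(n,\C)$ with ${\rm Ad}(\tau)H=-H$.

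For $\tau_0$ I would take the image in ${\rm PSp}(n,\C)$ of the matrix ${\rm J}_n$ of \eqref{defn-I-pq-J-n}. Three elementary matrix computations then close the argument. First, ${\rm J}_n\in{\rm Sp}(n,\C)$: it has determinant $1$, and ${\rm J}_n^t=-{\rm J}_n$ together with ${\rm J}_n^{\,2}=-{\rm I}_{2n}$ gives ${\rm J}_n^t\,{\rm J}_n\,{\rm J}_n={\rm J}_n$. Second, ${\rm J}_n\,{\rm diag}(D,-D)\,{\rm J}_n^{-1}=-{\rm diag}(D,-D)$; note that this holds for \emph{every} $H'\in\h$ irrespective of the multiplicities of the eigenvalues of $H'$, precisely because ${\rm J}_n$ is built from the scalar blocks $\pm{\rm I}_n$, which commute with every diagonal $D$. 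Third, ${\rm J}_n^{\,2}=-{\rm I}_{2n}$ lies in the centre of ${\rm Sp}(n,\C)$, so the image $\tau_0$ of ${\rm J}_n$ in ${\rm PSp}(n,\C)$ satisfies $\tau_0^{\,2}={\rm Id}$. Combining the three, $\tau=\sigma^{-1}{\rm J}_n\sigma$ (viewed in ${\rm PSp}(n,\C)$) is the desired reversing involution for $H$.

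The main point to emphasise is that there is really no obstacle beyond recognising the correct element, but the passage to the quotient is essential: in ${\rm Sp}(n,\C)$ itself ${\rm J}_n^{\,2}=-{\rm I}_{2n}\neq{\rm I}_{2n}$, so ${\rm J}_n$ is only an order-four reverser there — compare Example~\ref{exp-diag-x-x}, where ${\rm diag}(x,-x)$ fails to be strongly ${\rm Ad}_{{\rm Sp}(1,\C)}$-real — and the sharp statement inside ${\rm Sp}(n,\C)$ needs the finer analysis carried out below. If one prefers a more conceptual phrasing, ${\rm J}_n$ can be described as a representative of the longest Weyl group element of type $C_n$, which acts on $\h$ by $-1$; the content of the proof is then just that this representative becomes an involution in the adjoint group.
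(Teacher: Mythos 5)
Your proposal is correct and is essentially the paper's argument: the paper also conjugates the semisimple element into the Cartan subalgebra $\h$ and then builds the reverser from the $2\times 2$ block of Example~\ref{exp-diag-x-x} with $bc=-1$, which assembled over the pairs $(e_j,e_{n+j})$ is precisely your ${\rm J}_n$, an element squaring to the central $-{\rm I}_{2n}$ and hence an involution in ${\rm PSp}(n,\C)$. Your explicit verification (and the remark contrasting with the failure of strong reality in ${\rm Sp}(n,\C)$ itself) just spells out what the paper leaves to the reader.
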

\begin{proof}
Let $ X\in {\rm Lie\,( PSp} (n, \C) )$. Then  Ad$(g) X\in \h $ for some $ g\in {\rm PSp}(n,\C) $.  Thus we  define required  involution using Example \ref{exp-diag-x-x}. 
\end{proof}

The following  corollary is immediate.   
\begin{corollary}\label{cor-ss-spn}
Every semisimple element in $ \s\p(n, \C) $ is    {\rm Ad}$ _{{\rm Sp}(n,\C) } $-real. 
\end{corollary}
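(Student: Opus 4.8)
The plan is to obtain this immediately from the preceding theorem (every semisimple element of ${\rm Lie}({\rm PSp}(n,\C))$ is strongly ${\rm Ad}_{{\rm PSp}(n,\C)}$-real) by a lifting argument, and to record the explicit reverser at the same time. First, since any semisimple $X\in\s\p(n,\C)$ satisfies ${\rm Ad}(g)X\in\h$ for a suitable $g\in{\rm Sp}(n,\C)$, and ${\rm Ad}_{{\rm Sp}(n,\C)}$-reality is preserved under conjugation, it suffices to treat an element $H\in\h$, say $H={\rm diag}(h_1,\dots,h_n,-h_1,\dots,-h_n)=\left(\begin{smallmatrix} D & \\ & -D\end{smallmatrix}\right)$ with $D={\rm diag}(h_1,\dots,h_n)$.

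For such $H$ a direct block computation, using ${\rm J}_n^2=-{\rm I}_{2n}$, gives ${\rm Ad}({\rm J}_n)H={\rm J}_n H {\rm J}_n^{-1}=-H$, where ${\rm J}_n$ is as in \eqref{defn-I-pq-J-n}; moreover ${\rm J}_n^t={\rm J}_n^{-1}$ and $\det {\rm J}_n=1$, so ${\rm J}_n\in{\rm Sp}(n,\C)$ by the defining relation. Hence $H$, and therefore $X$, is ${\rm Ad}_{{\rm Sp}(n,\C)}$-real, with ${\rm J}_n$ (conjugated back by $g$) serving as a reversing element. Equivalently, one can argue structurally: the adjoint action of ${\rm Sp}(n,\C)$ on $\s\p(n,\C)$ factors through the central quotient ${\rm PSp}(n,\C)={\rm Sp}(n,\C)/\{\pm{\rm I}_{2n}\}$, and $\s\p(n,\C)={\rm Lie}({\rm PSp}(n,\C))$, so the preceding theorem produces $\bar\tau\in{\rm PSp}(n,\C)$ with $\bar\tau^2={\rm Id}$ and ${\rm Ad}(\bar\tau)X=-X$; any lift $\tau\in{\rm Sp}(n,\C)$ of $\bar\tau$ then satisfies ${\rm Ad}(\tau)X=-X$.

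The point to flag — and the reason the statement asserts only reality, not strong reality — is that such a lift $\tau$ satisfies merely $\tau^2\in\{\pm{\rm I}_{2n}\}$; Example~\ref{exp-diag-x-x} already exhibits $H={\rm diag}(x,-x)$ as a case where no reversing element of ${\rm Sp}(1,\C)$ is an involution. So the corollary itself presents no genuine obstacle: all the work sits in the preceding theorem (or in the one-line computation with ${\rm J}_n$), while the delicate question of when strong ${\rm Ad}_{{\rm Sp}(n,\C)}$-reality holds is precisely what Theorem~\ref{thm-sp-nC} will settle.
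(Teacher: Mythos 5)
Your proposal is correct and takes essentially the same route as the paper, which derives the corollary as ``immediate'' from the preceding ${\rm PSp}(n,\C)$ theorem --- precisely your lifting argument that the adjoint action factors through ${\rm PSp}(n,\C)={\rm Sp}(n,\C)/\{\pm {\rm I}_{2n}\}$, so any lift $\tau\in{\rm Sp}(n,\C)$ of the ${\rm PSp}$-reverser still satisfies ${\rm Ad}(\tau)X=-X$. Your explicit reverser ${\rm J}_n$ (correctly verified to lie in ${\rm Sp}(n,\C)$, with ${\rm J}_n^2=-{\rm I}_{2n}$ explaining why only reality, and not strong reality, survives) is an accurate concrete supplement consistent with Example~\ref{exp-diag-x-x}.
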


\begin{theorem}\label{thm-sp-nC}
A semisimple element in $\s\p(n,\C) $ is strongly  {\rm Ad}$ _{{\rm Sp}(n,\C) } $-real if and only if the  multiplicity of each non-zero eigenvalue  is even. 
\end{theorem}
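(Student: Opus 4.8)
The plan is to work with a semisimple element $H \in \s\p(n,\C)$ in the standard Cartan subalgebra $\h = \{{\rm diag}(h_1,\dots,h_m,-h_1,\dots,-h_m)\}$, which we may do by Corollary~\ref{cor-ss-spn} and the conjugacy of Cartan subalgebras; the pairing $h_j \leftrightarrow -h_j$ is forced by the fact that eigenvalues of elements of $\s\p(n,\C)$ come in $\pm$ pairs. For the ``if'' direction, suppose every nonzero eigenvalue of $H$ has even multiplicity. Then we may arrange $H = {\rm diag}(h_1,h_1,h_2,h_2,\dots)$ on the nonzero part together with a zero block, so the nonzero spectrum is organized into quadruples $\{h_j,h_j,-h_j,-h_j\}$. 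The first step is to build, on each such $4$-dimensional nonzero block, an explicit involution $\tau_j \in {\rm Sp}$ of the relevant symplectic form that swaps the $h_j$-eigenspace with the $-h_j$-eigenspace: concretely one can take a permutation-type matrix exchanging the two copies of $h_j$ with the two copies of $-h_j$, chosen so that $\tau_j^2 = {\rm I}$ and $\tau_j^t {\rm J} \tau_j = {\rm J}$. On the zero block, one handles the $2k$-dimensional symplectic space by a single involution in $\s\p$ that negates a suitable Cartan element, exactly as in Example~\ref{exp-diag-x-x} (where $g = \left(\begin{smallmatrix} 0 & b \\ c & 0\end{smallmatrix}\right)$ with $bc=-1$ lies in ${\rm Sp}(1,\C)$ and is an involution). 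Taking the direct sum $\tau = \bigoplus_j \tau_j$ (possibly after a fixed conjugation putting $H$ and ${\rm J}_n$ into a convenient simultaneous normal form) gives an involution in ${\rm Sp}(n,\C)$ conjugating $H$ to $-H$.

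For the ``only if'' direction, suppose $H$ is strongly ${\rm Ad}_{{\rm Sp}(n,\C)}$-real and let $\lambda \neq 0$ be an eigenvalue; I must show its multiplicity is even. Let $g \in {\rm Sp}(n,\C)$ be an involution with $gHg^{-1} = -H$. Then $g$ maps the $\lambda$-eigenspace $E_\lambda$ isomorphically onto $E_{-\lambda}$, so $\dim E_\lambda = \dim E_{-\lambda}$; this alone is not enough. The key extra input is the symplectic form: because $g \in {\rm Sp}(n,\C)$ and $H \in \s\p(n,\C)$, the form restricts to a \emph{nondegenerate} pairing between $E_\lambda$ and $E_{-\lambda}$ (eigenspaces for distinct, non-opposite... here opposite, eigenvalues of a symplectic element pair up under ${\rm J}_n$). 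So $W := E_\lambda \oplus E_{-\lambda}$ is a nondegenerate symplectic subspace of dimension $2\dim E_\lambda$, invariant under the involution $g$, and $g|_W$ is a symplectic involution interchanging the two Lagrangian-type pieces $E_\lambda, E_{-\lambda}$. Picking a basis $\{v_1,\dots,v_d\}$ of $E_\lambda$ (where $d = \dim E_\lambda$) gives a basis $\CC = \{v_1,\dots,v_d, g v_1,\dots,g v_d\}$ of $W$ in which $[g|_W]_\CC = \left(\begin{smallmatrix} 0 & {\rm I}_d \\ {\rm I}_d & 0\end{smallmatrix}\right)$, whose determinant is $(-1)^d$; I then argue that $\det(g|_W) = 1$, forcing $d$ even. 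This mirrors the determinant bookkeeping in the proofs of Theorem~\ref{Pro-SS-sl-n-c} and Theorem~\ref{thm-sonC-semisimple}.

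The step I expect to be the main obstacle is precisely extracting the constraint $\det(g|_W) = 1$: unlike the ${\rm SL}$ case, an element of ${\rm Sp}(n,\C)$ automatically has determinant $1$, but that is the determinant of $g$ on \emph{all} of $\C^{2n}$, not on the invariant subspace $W$. One must show that $g$ restricted to the complementary $g$-invariant symplectic subspace $W^\perp$ has determinant $1$ — or, more robustly, show directly that any symplectic involution of the nondegenerate symplectic space $W$ that interchanges a complementary pair of isotropic subspaces $E_\lambda, E_{-\lambda}$ of equal dimension $d$ must satisfy $(-1)^d = 1$. One clean way: the $+1$ and $-1$ eigenspaces of such a $g|_W$ are themselves nondegenerate symplectic subspaces (an involution in ${\rm Sp}$ decomposes $W$ orthogonally into its $\pm1$-eigenspaces, each symplectic, hence even-dimensional), and since $g|_W$ swaps $E_\lambda$ and $E_{-\lambda}$ the $-1$-eigenspace has dimension $d$; evenness of that dimension is exactly evenness of $d$. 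I would run the argument in this last form, as it avoids any delicate claim about $W^\perp$ and uses only the standard fact that a symplectic involution splits its space into two nondegenerate (hence even-dimensional) eigen-subspaces.
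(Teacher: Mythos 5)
Your proposal is correct in substance, and your ``only if'' direction takes a genuinely different route from the paper's. The paper fixes a nonzero eigenvalue $h_j$ of multiplicity $m$, restricts the involution $g$ to $V_j=E_{h_j}\oplus E_{-h_j}$, writes $[g]_{\CC_j\vee\CC_{n+j}}=\left(\begin{smallmatrix}0&B\\ C&0\end{smallmatrix}\right)$, and deduces from $g^2={\rm I}$ and $g^t{\rm J}_m g={\rm J}_m$ that $C^{-1}=B=-B^t$; an invertible skew-symmetric $m\times m$ matrix exists only for $m$ even. You instead observe that $g|_W$ (with $W=E_\lambda\oplus E_{-\lambda}$ nondegenerate) is a symplectic involution, so its $\pm1$-eigenspaces are symplectically orthogonal, hence nondegenerate and even-dimensional, and the $-1$-eigenspace has dimension $d=\dim E_\lambda$. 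Both arguments are sound; yours is coordinate-free and neatly sidesteps the determinant worry you raised --- which in fact is no obstacle anyway, since $g|_W$ is a symplectic automorphism of the nondegenerate space $W$ and therefore has determinant $1$ on the nose. Your ``if'' direction is essentially the paper's construction, organized in $4$-dimensional blocks: the involutions you postulate exist precisely because the off-diagonal block can be taken skew-symmetric (e.g.\ $B={\rm J}_1$ on each block, matching the paper's antidiagonal of ${\rm J}_1$'s); note that a genuine permutation matrix cannot serve, since the same computation as above forces $B^t=-B$, so the signs are essential --- your hedge ``chosen so that'' is carrying real weight there.

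One slip to correct: for the zero eigenspace you invoke Example~\ref{exp-diag-x-x} and assert that $g=\left(\begin{smallmatrix}0&b\\ c&0\end{smallmatrix}\right)$ with $bc=-1$ is an involution in ${\rm Sp}(1,\C)$. It is not: $g^2=bc\,{\rm I}_2=-{\rm I}_2$, which is exactly why ${\rm diag}(x,-x)$ is \emph{not} strongly ${\rm Ad}_{{\rm Sp}(1,\C)}$-real (the content of that example and of the theorem itself for odd multiplicity). The slip is harmless to your argument, because on the zero eigenspace $H$ vanishes, so no nontrivial reverser is needed there at all --- the identity suffices.
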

\begin{proof}
Let $ H $ be a strongly real  semisimple element in $ \s\p(n,\C) $. We can assume  $ H ={\rm diag } (h_1,\ldots, h_n, -h_1, \ldots, -h_n)$, and $ gH=-Hg $ for some involution $ g\in {\rm Sp}(n,\C) $. 
Suppose $ h_j \neq 0$ and multiplicity of $ h_j $ is $ m $. Let $ \CC_j:= \{e_{j_1},\ldots, e_{j_{m}} \} $ be an ordered basis of the  eigenspace of $ H $ corresponding to the eigenvalue $ h_j $. Then  $ \CC_{n+j}:= \{e_{n+j_1},\ldots, e_{n+j_{m}} \} $ is an ordered basis of the  eigenspace corresponding to the eigenvalue $ -h_j $. Let $ V_j$ be the $ \C$-span of $  \CC_j\vee  \CC_{n+j} $. Then the involution  $ g$  keeps $V_j $ invariant and $ (g|_{V_j})^t{\rm J}_m (g|_{V_j})\,={\rm J}_m $, where $ {\rm J}_m $ is as  in \eqref{defn-I-pq-J-n}. As $ H(ge_{j_l})= -h_j ge_{j_l} $ for $ 1\leq l \leq m $, we can write
\begin{align}\label{g-on-eigen-sp-Vj}
 [g]_{ \CC_j\vee  \CC_{n+j} } := \big( \begin{smallmatrix}
0 &  B\\
  C&0
 \end{smallmatrix} \big)\,,\quad {\rm for \, some \,} B,C\in {\rm GL}_m(\C) \,.
\end{align}
Since $ g|_{V_j} $ is an involution and $ (g|_{V_j})^t{\rm J}_m (g|_{V_j})\,={\rm J}_m $, it follows that $C^{-1}= B= - B^t $. Hence, $ m $ (the multiplicity of $ h_j $) has to be even.
	
Next assume that  multiplicity of each non-zero eigenvalue  is even. Can assume any semisimple element $ H ={\rm diag } (h_1,\ldots, h_n, -h_1, \ldots, -h_n)$, where $ h_{2j-1}=h_{2j} $ for $ j=1,\ldots,n/2 $. In this case to  define a required involution $ g $, set $ B:= \begin{pmatrix}
&&{\rm J}_1\\
&\reflectbox{$ \ddots $}\\
{\rm J}_1
\end{pmatrix} $ in \eqref{g-on-eigen-sp-Vj}. This completes the proof.
\end{proof}

\section{${\rm Ad}_{{\rm Sp}(n, \C)}$-reality in $\s\p(n,\C)$} 
The aim of this section is to prove that every element in $\s\p(n,\C)$ is adjoint real. For this, we need to recall some known construction and results, cf. \cite{BCM}. The structure of the centralizer of nilpotent elements 
play an important role here.

 Let $0 \neq X\in \s\p(n,\C)$ be a nilpotent element.  Let   $\a$ be a $\s\l_2$-triple in $\s\p(n,\C)$ containing $X$; see Theorem \ref{Jacobson-Morozov-alg}. 
Note that  $\C^{2n}$  is $\C$-module over $\a$.  By decomposing $\C^{2n}$ as direct sum of irreducible $\a$-module, let $\N_{\d}:=\{d_1,\dots,d_s \}$ be the dimensions of  irreducible  $\a$-modules.  Let $M(d-1)$ denote the sum of all  $\C$-subspaces  of $\C^{2n}$ which are irreducible $\a$-submodule of dimension $d$.
Then $M(d-1)$ is the {\it isotypical component} of $\C^{2n}$ containing all the irreducible submodules
of $\C^{2n}$ with highest weight $d-1$.  Let
\begin{equation*}\label{definition-L-d-1}
	L(d-1)\,:= \,  \text{Span}_\C\{ v\in  M(d-1)\, \mid \,   \, {\rm weight \, of }\, v \, {\rm is} \, 1-d\,\}\, .
\end{equation*}
Then it follows that    $M(d-1)\,=\, L(d-1)\oplus XL(d-1)\oplus \dots \oplus X^{d-1}L(d-1)$; see \cite[Lemma A.1]{BCM}.  Let $t_{d} \,:=\, \dim_\C L(d-1).$    Then  $\sum_{d\in \N_{\d}} d t_{d}\,=\, 2n$, and   let $\{X^lv^{d}_j\, \mid \,  0\leq l < d, \, 1\leq j \leq t_{d}, d\in \N_{\d} \}$ be a basis of  $\C^{2n}$ as constructed in \cite[Lemma A.6]{BCM}.   We need to fix an ordering of  the above basis. 
Let $(v^d_1,\, \dotsc ,\, v^d_{t_d})$
be an ordered $\C$-basis of $ L(d-1) $  for $d \,\in\, \N_\d$. Then it follows that
\begin{equation*}\label{old-ordered-basis-part}
	\BC^l (d) \,:=\, (X^l v^d_1,\, \dotsc ,\,X^l v^d_{t_d})
\end{equation*}
is an ordered $\C$-basis of $X^l L(d-1)$ for $0\,\leq\, l \,\leq\, d-1$ with $d\,\in\, \N_\d$. 
Define
\begin{equation*}\label{old-ordered-basis}
	\BC(d) \,:=\, \BC^0 (d) \vee \cdots \vee \BC^{d-1} (d) \ \forall\ d \,\in\, \N_\d\, ,\ \text{ and }\ \BC\,:=\,
	\BC(d_1) \vee \cdots \vee \BC(d_s)\, .
\end{equation*}
Let $\langle \cdot,\, \cdot \rangle \,:\, \C^{2n} \times \C^{2n} \,\longrightarrow\, \C \ $  be the symplectic form  given by $\langle x,\, y \rangle \,: = x^t{\rm J}_n y$. 
Define another  form on $L(d-1)$ below, as in \cite[p.~139]{CoMc},
\begin{equation}\label{new-form}
	(\cdot  ,\,\cdot)_{d} \,:\, L(d-1) \times L(d-1 )\,\longrightarrow\, \C\quad  ;\ \ \
	(v,\, u)_d \,:=\,  \langle v \,,\, X^{d-1} u \rangle\,.
\end{equation}

The following result is due to Springer-Steinberg, which describes the structure of the centralizer of the $\s\l_2$-triple in ${\rm Sp}(n, \C)$ and $\s\p(n,\C)$. 

\begin{lemma}[{cf.~\cite[Theorem 6.1.3]{CoMc}, \cite[Lemma 4.4]{BCM}}]
	\label{reductive-part-comp} 
The following isomorphisms hold: 
	\begin{enumerate}
\item If $\a$ is a $\s\l_2$-triple in $\s\p(n, \C)$, then 
$$ 
\ZC_{{\rm Sp}(n, \C)} (\a) 
		=  \left\{  g \in {\rm SL}(\C^{2n})   \middle\vert    \begin{array}{cccc}
			g (X^l L(d-1))\, \subset \,  X^l L(d-1) ,\vspace{.14cm} \\
			\!  \big[g |_{ X^l L(d-1)}\big]_{{\BC}^l (d)} =   \big[g |_{  L(d-1)} \big]_{{\BC}^0 (d)} , (gx,gy)_d = ( x,y)_d   \vspace{.14cm}\\
			\text{ for all }  d\in \N_\d,~ 0 \leq l \leq d-1, \text{ and }  x,y \in L(d-1)
		\end{array} 
		\right\};
$$
		here $(\cdot,\, \cdot)_d$ is as in \eqref{new-form}.
		
\item   \label{reductive-part-comp-2}
In particular, 
\begin{align}\label{no-iso-centr}
\ZC_{{\rm Sp}(n, \C)} (\a)  \, &\simeq\, \Big\{ g \in \prod_{d \,\in\, \N_\d} {\rm GL} (L(d-1))	\,\bigm|   \,  \,       (gx,gy)_d = ( x,y)_d  ,\,                   \bigchi_\d (g) \,=\,1 \Big\}   \nonumber\\
&	\simeq\, \Big\{ \prod_{d \in \O_\d}{\rm  Sp}(t_\d/2,\, \C)_\Delta^d \, \times \, 	\prod_{d \in \E_\d}{\rm  O}(t_\d,\, \C)_\Delta^d		 \Big\}\,.
\end{align}

\item \label{reductive-part-comp-3}
	$$
\z_{{\s\p}(n, \C)} (\a) 
=  \left\{ A\in {\s\l}(\C^{2n})   \middle\vert    \begin{array}{cccc}
\!	A (X^l L(d-1))\, \subset \,  X^l L(d-1) ,\vspace{.14cm} \\
\!\!	\big[A |_{ X^l L(d-1)}\big]_{{\BC}^l (d)} =   \big[A |_{  L(d-1)} \big]_{{\BC}^0 (d)} , (Ax,y)_d + ( x,Ay)_d=0 \!\!\vspace{.14cm}\\
	\text{ for all }  d\in \N_\d,~ 0 \leq l \leq d-1, \text{ and }  x,y \in L(d-1)
\end{array} 
\right\}\!.
$$

\item
In particular, 
$$ \z_{{\s\p}(n, \C)} (\a)   \,\simeq\, \Big\{ 	 \Big(  \bigoplus_{d \in \O_\d}{\s\p}(t_\d/2,\, \C)_\Delta^d \Big)\,\,\, {{\bigoplus}} \, \,	\Big(\bigoplus_{d \in \E_\d}{\o}(t_\d,\, \C)_\Delta^d	\Big)	 \Big\}\,.$$
	\end{enumerate}
\end{lemma}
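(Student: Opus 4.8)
The plan is to deduce the whole statement from the representation theory of $\s\l_2(\C)$, applied to $\C^{2n}$ regarded as a module over the triple $\a=\{X,H,Y\}$, together with the behaviour of the symplectic form under the action of $\a$. The guiding observation is that an element $A\in\s\l(\C^{2n})$ centralizes $\a$ exactly when $A$ is an endomorphism of $\C^{2n}$ as an $\a$-module, and $g\in{\rm SL}(\C^{2n})$ centralizes $\a$ exactly when $g$ is an $\a$-module automorphism; the extra requirement that $A\in\s\p(n,\C)$ (resp. $g\in{\rm Sp}(n,\C)$) is then what carves out the orthogonal and symplectic factors attached to the lowest-weight spaces. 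Accordingly I would first establish the matrix normal forms, namely (1) and (3), and then read off the ``in particular'' isomorphisms (2) and (4).

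For (1) and (3): commuting with $H$ forces $A$ to preserve the weight decomposition of $\C^{2n}$, and commuting with $X$ and $Y$ makes $A$ a morphism of $\a$-modules, so $A$ preserves each isotypical component $M(d-1)$ and, by Schur's lemma applied to $M(d-1)\simeq L(d-1)\otimes V_d$ (with $V_d$ the irreducible $\s\l_2(\C)$-module of dimension $d$), is completely determined by its restriction $A|_{L(d-1)}$. Spelt out in terms of the decomposition $M(d-1)=L(d-1)\oplus XL(d-1)\oplus\cdots\oplus X^{d-1}L(d-1)$ and the ordered basis $\BC$ of \cite[Lemma A.1, A.6]{BCM}, this is precisely the assertion that $A\big(X^{l}L(d-1)\big)\subset X^{l}L(d-1)$ and $\big[A|_{X^{l}L(d-1)}\big]_{\BC^{l}(d)}=\big[A|_{L(d-1)}\big]_{\BC^{0}(d)}$, and likewise for $g$. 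Finally the defining condition $A\in\s\p(n,\C)$, i.e. $\langle Ax,y\rangle+\langle x,Ay\rangle=0$ for all $x,y$, need only be checked on a spanning set, and transporting it to the lowest-weight spaces via \eqref{new-form} turns it into $(Ax,y)_d+(x,Ay)_d=0$ for all $x,y\in L(d-1)$; in the same way $g\in{\rm Sp}(n,\C)$, i.e. $\langle gx,gy\rangle=\langle x,y\rangle$, becomes $(gx,gy)_d=(x,y)_d$. This gives (1) and (3).

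For (2) and (4): first, each form $(\cdot,\cdot)_d$ on $L(d-1)$ is non-degenerate. Indeed, for $d\neq d'$ the restriction of $\langle\cdot,\cdot\rangle$ to $M(d-1)\times M(d'-1)$ satisfies $\langle Zx,y\rangle+\langle x,Zy\rangle=0$ for $Z\in\a$, hence corresponds to an $\a$-module map $M(d-1)\to M(d'-1)^{*}\simeq M(d'-1)$, which vanishes by Schur; so the isotypical components are mutually $\langle\cdot,\cdot\rangle$-orthogonal and each $M(d-1)$ is non-degenerate. Inside $M(d-1)$ a weight count shows that $X^{l}L(d-1)$ pairs non-trivially only with $X^{d-1-l}L(d-1)$, and the resulting perfect pairing between $L(d-1)$ and $X^{d-1}L(d-1)$ is exactly $(\cdot,\cdot)_d$. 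Next, since $X\in\s\p(n,\C)$ one computes $(v,u)_d=\langle v,X^{d-1}u\rangle=(-1)^{d-1}\langle X^{d-1}v,u\rangle=(-1)^{d}(u,v)_d$, so $(\cdot,\cdot)_d$ is symmetric for $d\in\E_\d$ and alternating for $d\in\O_\d$ (in particular $t_d$ is even when $d$ is odd). Hence the $L(d-1)$-block of an element of $\z_{\s\p(n,\C)}(\a)$ ranges over $\o(t_d,\C)$ when $d\in\E_\d$ and over $\s\p(t_d/2,\C)$ when $d\in\O_\d$, and for the group over ${\rm O}(t_d,\C)$ resp. ${\rm Sp}(t_d/2,\C)$; the superscript ${}_\Delta^{d}$ records the $d$ identical copies of this block acting on $L(d-1),XL(d-1),\dots,X^{d-1}L(d-1)$. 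The determinant/trace constraint ($\bigchi_\d(g)=1$ in \eqref{no-iso-centr}, resp. the $\s\l$-condition) is then automatic: an ${\rm O}(t_d,\C)$-block contributes $(\pm1)^{d}=1$ because $d$ is even there, an ${\rm Sp}$-block has determinant $1$, and the orthogonal and symplectic Lie algebras are traceless. This yields (2) and (4).

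The step I expect to require real care is the bookkeeping behind (1) and (3): verifying cleanly that commuting with $X$ forces the block-wise equality $\big[A|_{X^{l}L(d-1)}\big]_{\BC^{l}(d)}=\big[A|_{L(d-1)}\big]_{\BC^{0}(d)}$ relative to the specific ordered basis $\BC$, and pinning down the identification $M(d-1)\simeq L(d-1)\otimes V_d$ so that the forms $(\cdot,\cdot)_d$ are correctly normalized --- this is exactly where the appendix of \cite{BCM} does the work. Once that normal form is in hand, the sign identity $(v,u)_d=(-1)^{d}(u,v)_d$ and the ensuing split into $\o(t_d,\C)$ for even $d$ and $\s\p(t_d/2,\C)$ for odd $d$ are short, but they are the conceptual core of the statement.
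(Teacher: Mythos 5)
Your sketch is correct, and it is precisely the standard Springer--Steinberg argument (Schur's lemma applied to the isotypic decomposition $M(d-1)\simeq L(d-1)\otimes V_d$, mutual orthogonality of the isotypic components, the sign identity $(v,u)_d=(-1)^d(u,v)_d$ splitting the lowest-weight blocks into $\o(t_d,\C)$ for even $d$ and $\s\p(t_d/2,\C)$ for odd $d$, and the observation that the determinant/trace constraint is automatic) that underlies the references the paper cites for this lemma. The paper itself gives no proof---it quotes the result from \cite[Theorem 6.1.3]{CoMc} and \cite[Lemma 4.4]{BCM}---so your route coincides with the cited proofs rather than differing from anything in the paper.
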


Now we will characterize the   ${\rm Ad}_{{\rm Sp}(n, \C)}$-real elements in $\s\p(n,\C)$.

\begin{theorem}\label{spnc-real}
Every element of  $\s\p(n,\C)$  is  ${\rm Ad}_{{\rm Sp}(n, \C)}$-real.
\end{theorem}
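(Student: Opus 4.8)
The plan is to use the Jordan decomposition $X = S + N$, where $S$ is semisimple and $N$ is nilpotent, with $[S,N]=0$, and then to glue together a reverser for $S$ with a reverser for $N$ that both live in a common centralizer. First I would recall from Corollary \ref{cor-ss-spn} that the semisimple part $S$ is ${\rm Ad}_{{\rm Sp}(n,\C)}$-real, so there is an explicit $\sigma \in {\rm Sp}(n,\C)$ with ${\rm Ad}(\sigma)S = -S$; from Example \ref{exp-diag-x-x} (applied block-by-block on paired eigenspaces of $S$, using the form $\langle x,y\rangle = x^t{\rm J}_n y$) one can in fact choose $\sigma$ so that, writing $\C^{2n}$ as the sum of eigenspaces of $S$, $\sigma$ interchanges the $\lambda$- and $(-\lambda)$-eigenspaces and is $\sqrt{-1}\,\mathrm{Id}$ on the $0$-eigenspace. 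The crucial point is that such a $\sigma$ then normalizes the centralizer $Z_{{\rm Sp}(n,\C)}(S)$, which by the standard description (cf.\ \cite[Lemma 2.1.2]{CoMc}, \cite[Theorem 2.3.3]{CoMc}) is a product of smaller classical groups ${\rm GL}(\C^{m_\lambda})$ (one for each pair $\pm\lambda$, $\lambda\neq 0$) and one ${\rm Sp}$ factor coming from the $0$-eigenspace.

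Next I would observe that the nilpotent part $N$ lies in $\z_{\s\p(n,\C)}(S)$, which is the Lie algebra of that product of classical groups. So it suffices to prove the following reduction: every nilpotent element of a product $\prod_i {\rm GL}(\C^{m_i}) \times {\rm Sp}(k,\C)$, acting on its Lie algebra, is adjoint real in a way \emph{compatible} with conjugation by $\sigma$. Concretely, on each ${\rm Sp}$-block the nilpotent part is genuinely symplectic-nilpotent, and I would invoke the structure of $Z_{{\rm Sp}(k,\C)}(\a)$ and $\z_{\s\p(k,\C)}(\a)$ from Lemma \ref{reductive-part-comp} together with the $\s\l_2$-triple $\a=\{N,H,Y\}$ attached to $N$ (Theorem \ref{Jacobson-Morozov-alg}): the element $\exp(\frac{\pi}{2}(N-Y))$ or, more simply, a suitable element of $\mathrm{SL}_2(\C)$-image acting diagonally on each isotypical component $M(d-1)$ conjugates $N$ to $-N$, and one checks it preserves ${\rm J}_n$ using the forms $(\cdot,\cdot)_d$ of \eqref{new-form}; on the ${\rm GL}(\C^{m_i})$ blocks (the paired $\pm\lambda$ eigenspaces), the nilpotent part decomposes as $(N_i, N_i')$ on $\C^{m_i}\oplus\C^{m_i}$ and the map swapping the two summands (which is essentially the restriction of $\sigma$) already conjugates $S$ to $-S$ there; one then adjusts within ${\rm GL}(\C^{m_i})$, using reversibility of nilpotents in $\mathfrak{gl}_{m_i}(\C)$ from \cite{GM}, to also flip $N$.

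Putting these together: choose a reverser $\rho \in Z_{{\rm Sp}(n,\C)}(S)$ for $N$ built blockwise as above; then set $\tau := \sigma \rho$ (or $\rho\sigma$). Since $\rho$ centralizes $S$ and ${\rm Ad}(\sigma)S=-S$, we get ${\rm Ad}(\tau)S=-S$; since $\sigma$ normalizes $Z_{{\rm Sp}(n,\C)}(S)$ and preserves the $\s\l_2$-triple decomposition appropriately, ${\rm Ad}(\tau)N = {\rm Ad}(\sigma){\rm Ad}(\rho)N = {\rm Ad}(\sigma)(-N) = -N$ (the last equality because $-N$ is again in $\z_{\s\p(n,\C)}(S)$ and $\sigma$ acts there in a way that fixes $N$ up to the swap already accounted for — this is the step requiring care). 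Hence ${\rm Ad}(\tau)X = -S - N = -X$, and $\tau\in{\rm Sp}(n,\C)$, so $X$ is ${\rm Ad}_{{\rm Sp}(n,\C)}$-real.

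The main obstacle I anticipate is the compatibility bookkeeping: one must produce the nilpotent-reverser $\rho$ and the semisimple-reverser $\sigma$ so that they \emph{simultaneously} reverse $S$ and $N$, rather than separately, and this forces $\rho$ to be chosen inside $Z_{{\rm Sp}(n,\C)}(S)$ and $\sigma$ to normalize that centralizer and to interact correctly with the $\s\l_2$-triple attached to $N$ inside $\z_{\s\p(n,\C)}(S)$. On the ${\rm GL}$-type blocks this is unproblematic because \emph{every} element of $\mathfrak{gl}_m(\C)$ is adjoint real (real plus nilpotent, via e.g.\ rational canonical form paired with its negative), so the delicate part is genuinely the ${\rm Sp}$-block, where one has to verify that the $\mathrm{SL}_2(\C)$-type element flipping $N$ preserves the symplectic form; this is where Lemma \ref{reductive-part-comp} and the forms $(\cdot,\cdot)_d$ do the work, and I would lean on \cite[Lemma A.1, Lemma A.6]{BCM} for the explicit basis $\BC$ making the verification mechanical.
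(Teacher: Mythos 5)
Your plan is the mirror image of the paper's argument (they organize everything around the nilpotent part, you around the semisimple part), but as written it has a genuine gap exactly at the step you flag and do not close. Writing $\tau=\sigma\rho$ with $\rho\in Z_{{\rm Sp}(n,\C)}(S)$ a reverser of $N$, your chain ${\rm Ad}(\tau)N={\rm Ad}(\sigma)(-N)=-N$ presupposes ${\rm Ad}(\sigma)N=N$. But your $\sigma$ interchanges the $\lambda$- and $(-\lambda)$-eigenspaces $V_\lambda, V_{-\lambda}$ of $S$, and under the symplectic pairing $N|_{V_{-\lambda}}$ is $-(N|_{V_\lambda})^t$ in dual bases; a generic swap therefore conjugates the $V_\lambda$-component of $N$ into the $V_{-\lambda}$-component and does \emph{not} centralize $N$. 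To make ${\rm Ad}(\sigma)N=N$ true you must twist the swap on each $V_\lambda\oplus V_{-\lambda}$ by a matrix $D$ with $DN_\lambda D^{-1}=-N_\lambda^t$ (possible since both are nilpotent of the same Jordan type) and take the complementary block to be $-(D^t)^{-1}$ so that the result lies in ${\rm Sp}$; none of this is in the proposal, and the alternative you suggest, ``adjusting within ${\rm GL}(\C^{m_i})$ by reversibility of nilpotents in $\g\l_{m_i}(\C)$,'' reintroduces the same compatibility problem, because the adjusting element must not destroy the reversal of $S$. A second, smaller error: $\sqrt{-1}\,{\rm Id}$ on the $0$-eigenspace multiplies the symplectic form by $-1$, so that block of your $\sigma$ is not symplectic; since $S$ vanishes there you should take the identity instead, and then the $0$-block of $N$ must be reversed by a genuinely symplectic element, i.e.\ by the reality of nilpotent elements in $\s\p(V_0)$ from \cite{GM} -- your candidate $\exp\big(\tfrac{\pi}{2}(N-Y)\big)$-type element is the right idea but is asserted rather than checked.

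For comparison, the paper sidesteps all of this bookkeeping by fixing an $\s\l_2$-triple $\a$ through $X_n$ with $X_s\in\z_{\s\p(n,\C)}(\a)$ and splitting the reverser the other way: the element $\sigma$ of \cite{GM} reverses $X_n$ and is \emph{scalar} on each lowest-weight space $L(d-1)$, so by Lemma \ref{reductive-part-comp} it automatically commutes with $X_s$; and $\tau$ is chosen inside $\ZC_{{\rm Sp}(n,\C)}(\a)\simeq\prod{\rm Sp}(t_d/2,\C)\times\prod{\rm O}(t_d,\C)$, where $X_s$ is reversed factorwise by Corollary \ref{cor-ss-spn} and Proposition \ref{Onc}, while commutation with $X_n$ is automatic. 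Your eigenspace route can be repaired along the lines indicated above (twisted swaps on the $\pm\lambda$ pairs, nilpotent reality on the $0$-block), and would give an alternative proof, but in its current form the key commutation is unproved, so the argument does not yet go through.
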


\begin{proof}
Let $X\in \s\p(n,\C)$, and $X=X_s\,+\, X_n$ be the Jordan decomposition of $X$ where $X_s$ and $X_n$ are the semisimple and nilpotent part of $X$, respectively. In view of  Corollary \ref{cor-ss-spn} and  \cite[Lemma 4.2]{GM},  we may further assume that $X_s\neq 0,\, X_n\neq 0$.  
  We will construct below  two  elements  $\sigma $ and $\tau$ in ${\rm Sp}(n, \C)$  so that
\begin{itemize}
	\item $\sigma \x \sigma^{-1} = -\x$ and $\sigma X_s \sigma^{-1} = X_s$,
	\item $ \tau X_s   \tau^{-1} = -X_s $ and $ \tau \x  \tau^{-1} = \x$. 
\end{itemize} 
 Then $\sigma\tau $ will do the job, i.e., $\sigma \tau X (\sigma \tau )^{-1}=-X $.
  
\noindent  {\it Construction of $\sigma$.}    Define $ \sigma \in {\rm GL}\,(\C^{2n} )$ below, as done in \cite[(4.3)]{GM} :
 \begin{align}\label{real-nilpotent-sp-n-c}
 	\sigma (X^l v^{d}_j ) := \begin{cases}
 		(-1)^l X^{l} v^{d}_j &  \qquad   \text{\ if\ } d\in \O_\d   \,,   \\
 		(-1)^{l}\sqrt{-1} X^l v^{d}_{j} &\qquad \text{\ if\ $ d\in \E_\d$}\, . 
 	\end{cases}		
 \end{align}
 Note that $ \sigma \x=-\x \sigma  $, and $\langle \sigma x, \sigma y \rangle = \langle x,y \rangle$ for all $ x,y\in \C^{2n} $. This shows that $ \sigma \in {\rm Sp}(n,\C)$; cf. \cite[Section 4.3]{GM}.   
 
The element $X_s$ is a semisimple element and commutes with $\x$, and $ \z_{{\s\p}(n, \C)} (\a)   $ is reductive part (Levi part) of $\z_{{\s\p}(n, \C)} (\x) $; see \cite[Lemma 3.7.3]{CoMc}.
Thus, $ X_s \in   \z_{{\s\p}(n, \C)} (\a)   $.   Write $[X_s|_{L(d-1)}]_{\BC^0_d} := X_{sd}$. Then using Lemma \ref{reductive-part-comp}\eqref{reductive-part-comp-3},  $X_s = \oplus_{d\in \N_\d} (X_{sd})_\Delta ^d$.
 Also $[\sigma]_{\BC^0(d)}$ is a scalar matrix; see $\eqref{real-nilpotent-sp-n-c}$.  
 Thus from Lemma \ref{reductive-part-comp}(3), it follows that  $\sigma X_s \sigma^{-1} = X_s$.

\noindent  {\it Construction of $\tau$.}   Note that 
  $$
  X_{sd}\,\in \, 
  \begin{cases}
{\s\p}(t_d/2,\, \C)    &   \   {\rm for} \  \th\in \O_{\d}\\  	
{\o}(t_d,\, \C)    &   \   {\rm for}\  \th\in \E_{\d}\,.
  \end{cases}
  $$
In view of Corollary \ref{cor-ss-spn} and Proposition \ref{Onc}, there exists
 $\tau_{d}\in 
\begin{cases}
	{\rm Sp}(t_d/2,\, \C)    &     {\rm for}\    \th\in \O_{\d}\\  	
	{\rm O}(t_d,\, \C)    &     {\rm for}\    \th\in \E_{\d}
\end{cases}$
such that  $\tau_d X_{sd}\tau_d^{-1}\,=\,-X_{sd}$  for all $d\in \N_{\d}$.   Finally set   $    \tau:=    \bigchi(\tau_{d})$, where $\bigchi$ is an isomorphism  in \eqref{no-iso-centr}; see \cite[Section 3.4]{BCM2} for such isomorphism. 
 Then   $ \tau X_s   \tau^{-1} = -X_s $ and $ \tau \x  \tau^{-1} = \x$. This completes the proof.
 \end{proof}

\subsection*{Acknowledgment} 
 Gongopadhyay acknowledges SERB  grant CRG/2022/003680.    The  authors thank  Tejbir for his comments.

\end{document}